\font\tencmmib=cmmib10 \skewchar\tencmmib '60
\font\tenmsb=msbm10
\def\Bbb#1{\hbox{\tenmsb#1}}
\def\bbox{\quad\hbox{\vrule \vbox{\hrule \vskip2pt \hbox{\hskip2pt
\vbox{\hsize=1pt}\hskip2pt} \vskip2pt\hrule}\vrule}}
\def\lessim{\ \lower4pt\hbox{$
\buildrel{\displaystyle <}\over\sim$}\ }
\def\gessim{\ \lower4pt\hbox{$\buildrel{\displaystyle >}
\over\sim$}\ }
\def\eps{{\varepsilon}}
\def\qed{\hfill\break\rightline{$\bbox$}}
\newcommand{\veps}{\vec{\varepsilon}}
\newcommand{\vsi}{{\vec{\sigma}}}
\newcommand{\vrho}{{\vec{\rho}}}
\newcommand{\vtau}{\vec{\tau}}
\newcommand{\veta}{\vec{\eta}}
\newcommand{\vx}{\vec{x}}
\newcommand{\vz}{\vec{z}}
\newtheorem{lemma}{\bf Lemma}
\newtheorem{theorem}{\bf Theorem}
\newtheorem{proposition}{\bf Proposition}
\newenvironment{Proof of lemma}{\noindent{\bf Proof of Lemma}}{\hfill$\Box$\newline}
\newenvironment{Proof of theorem}{\noindent{\bf Proof of Theorem}}{\hfill{\footnotesize${\square}$}\newline}
\newenvironment{Proof of theorems}{\noindent{\bf Proof of Theorems}}{\hfill$\Box$\newline}
\newenvironment{Proof of proposition}{\noindent{\bf Proof of Proposition}}{\hfill$\Box$\newline}
\newenvironment{Proof of propositions}{\noindent{\bf Proof of Propositions}}{\hfill$\Box$\newline}
\newenvironment{Proof of exercise}{\noindent{\it Proof of Exercise:}}{\hfill$\Box$}
\numberwithin{equation}{section}
\font\tencmmib=cmmib10 \skewchar\tencmmib '60
\font\tenmsb=msbm10
\def\Bbb#1{\hbox{\tenmsb#1}}
\def\bbox{\quad\hbox{\vrule \vbox{\hrule \vskip2pt \hbox{\hskip2pt
\vbox{\hsize=1pt}\hskip2pt} \vskip2pt\hrule}\vrule}}
\def\lessim{\ \lower4pt\hbox{$
\buildrel{\displaystyle <}\over\sim$}\ }
\def\gessim{\ \lower4pt\hbox{$\buildrel{\displaystyle >}
\over\sim$}\ }
\def\eps{\varepsilon}
\def\go0{\to 0}
\def\leftitem#1{\item{\hbox to\parindent{\enspace#1\hfill}}}
\def\qed{\hfill\break\rightline{$\bbox$}}
\def\sg{\sigma}
\def\sg2{\sigma^2}
\def\__{_{\infty}}
\begin{document}

\title{The Aizenman-Sims-Starr scheme and Parisi formula for mixed $p$-spin spherical models}
\author{Wei-Kuo Chen\footnote{Department of Mathematics, University of California at Irvine, email: weikuoc@uci.edu.}}

\maketitle

\begin{abstract}  
The Parisi formula for the free energy in the spherical models with mixed even $p$-spin interactions was proven in Michel Talagrand \cite{Tal061}. 
In this paper we study the general mixed $p$-spin spherical models including $p$-spin interactions for odd $p$. We establish the Aizenman-Sims-Starr scheme and from this together with many well-known results and Dmitry Panchenko's recent proof on the Parisi ultrametricity conjecture \cite{Pan11:4}, we prove the Parisi formula.
\end{abstract}

{Keywords: Parisi formula, ultrametricity}

\section{Introduction and main results.}

In the past decades, the Sherrington-Kirkpatrick model \cite{SK72} of Ising spin glass has been intensively studied  with the aim of understanding the strange magnetic behavior of certain alloys. Despite only being a mean-field model, the formula for its free energy has an intricate and beautiful variational formula, first discovered by G. Parisi \cite{Parisi79,Parisi80}, based on a hierarchical replica symmetry breaking scheme. As it is hard to compute the free energy from the Parisi formula, the closely related spherical model was studied by A. Crisanti and H. J. Sommers \cite{CS92}; this model is widely believed to retain the main features of the Ising case, and the analogous Parisi formula admits a more explicit representation for the free energy.

Mathematically, the Parisi formula for the Ising and spherical spin glass models was proven rigorously and generalized to mixtures of $p$-spin interactions  in the seminal works of M. Talagrand \cite{Tal061,Tal06} following the discovery of the replica symmetry breaking interpolation scheme by F. Guerra \cite{Guerra03}. For technical reasons, only the mixture of even $p$-spin interactions was considered. The main missing ingredient in proving the general case was the validity of the ultrametric property for the Gibbs measure, which was recently verified by D. Panchenko \cite{Pan11:4}. This together with a series of known results led to the Parisi formula for the Ising spin glass model with general mixed $p$-spin interactions \cite{Pan11:1}.

In view of the approach in \cite{Pan11:1}, the most crucial step is the derivation of the inequality that the lower limit of the free energy is bounded from below by the Parisi formula. It is well-known, in the case of mixed even $p$-spin models, that the limiting free energy can be represented by a new variational formula through the Aizenman-Sims-Starr (A.S.S.) scheme \cite{ASS03}. An important implication of this scheme is that it provides a natural lower bound for the lower limit of the free energy for {\it general} mixed $p$-spin models. Since under a small perturbation of the Hamiltonian the asymptotic Gibbs measures in the A.S.S. scheme satisfy the Ghirlanda-Guerra (G.G.) identities, the main result in \cite{Pan11:4} implies that the support of these measures are asymptotically ultrametric. This property then allows us to characterize the asymptotic Gibbs measure via the Poisson-Dirichlet cascades and consequently recover the Parisi formula in the general mixed $p$-spin case.

In the present paper, we will establish the Aizenman-Sims-Starr (A.S.S.) scheme and prove the Parisi formula in the spherical model with general mixed $p$-spin interactions taking the approach of \cite{Pan11:1}.  However, the task turned out to be much more difficult, mostly, due to the fact that uniform measure on the sphere is not a product measure. In particular, as we shall see in the proof of Theorem $\ref{ASS:prop1}$ below, the establishment of the A.S.S. scheme in this case becomes much more intricate since in general its construction requires a certain decoupling of the configuration space. Also, the approximation of the Parisi formula using the ultrametricity of the Gibbs measure will be more involved. These obstacles are the main concerns of the paper and will be studied in the next two sections.

Let us now state the mixed $p$-spin spherical model. For given $N\geq 1,$ we denote by $S_N=\{\vsi\in\mathbb{R}^N:\|\vsi\|=\sqrt{N}\}$ the  configuration space and by $\lambda_N$ its normalized surface measure. Consider the pure $p$-spin Hamiltonians $H_{N,p}(\vsi)$ for $p\geq 1$ indexed by $\vsi\in S_N,$
\begin{align}\label{eq1}
H_{N,p}(\vsi)=\frac{1}{N^{(p-1)/2}}\sum_{1\leq i_1,\ldots,i_p\leq N}g_{i_1,\ldots,i_p}\sigma_{i_1}\cdots\sigma_{i_p},
\end{align}
where the random variables $g_{i_1,\ldots,i_p}$ for all $p\geq 1$ and all $(i_1,\ldots,i_p)$ are i.i.d. standard Gaussian. The Hamiltonian of the mixed $p$-spin spherical model is given by
\begin{align}
\label{eq2}
H_N(\vsi)=\sum_{p\geq 1}\beta_p H_{N,p}(\vsi)
\end{align}
with coefficients $(\beta_p)$ that satisfy $\beta_p\geq 0$ and decrease fast enough, $\sum_{p\geq 1}2^p\beta_p^2<\infty.$ This technical assumption ensures that on $S_N$ the series $(\ref{eq2})$ converges a.s. and the covariance of the Gaussian process
is given by a function of the normalized scalar product, called the overlap, $R_{1,2}=N^{-1}\sum_{i\leq N}\sigma_i^1\sigma_i^2$
of spin configurations $\vsi^1$ and $\vsi^2,$
$$
\mathbb{E}H_N(\vsi^1)H_N(\vsi^2)=N\xi(R_{1,2}),
$$
where $\mathbb{E}$ is the expectation with respect to all Gaussian randomness and $\xi(x)=\sum_{p\geq 1}\beta_p^2 x^p.$ 

Now we formulate the Parisi formula. Given $k\geq 1$, let $\mathbf{m}=(m_\ell)_{0\leq \ell\leq k}$ and $\mathbf{q}=(q_\ell)_{0\leq \ell\leq k+1}$ be two sequences satisfying
\begin{align*}
&0=m_0\leq m_1\leq\ldots\leq m_{k-1}\leq m_k=1,\\
&0=q_0\leq q_1\leq \ldots \leq q_k\leq q_{k+1}=1.
\end{align*}
One may think of the triplet $(k,\mathbf{m},\mathbf{q})$ as a distribution function $\vx$ on $\left[0,1\right]$ with $\vx(q)=m_\ell$ if $q_\ell\leq q<q_{\ell+1}$ for $0\leq \ell\leq k$ and $\vx(1)=1.$ We define for $1\leq \ell\leq k,$
$$d_\ell=\sum_{\ell\leq p\leq k}m_p(\xi'(q_{p+1})-\xi'(q_p)).$$ 
For a given parameter $b>d_1,$ we define $D_\ell=b-d_\ell$ for $1\leq \ell\leq k$ and $D_{k+1}=b.$ Let $\theta(x)=x\xi'(x)-\xi(x).$ 
The formulation of the Parisi formula involves a functional $\mathcal{P}$ of the distribution function $\vx$,
\begin{align*}
\mathcal{P}(\vx)=\inf_{b>d_1}\frac{1}{2}\left(b-1-\log b+\frac{\xi'(q_1)}{D_1}+\sum_{1\leq \ell\leq k}\frac{1}{m_\ell}\log \frac{D_{\ell+1}}{D_\ell}-\sum_{1\leq \ell\leq k}m_\ell(\theta(q_{\ell+1})-\theta(q_{\ell}))\right),
\end{align*}
which is the limit of the Parisi functional described below. For every $M\geq 1,$ let $\vz_p=(z_{p,1},\ldots,z_{p,M})$ for $0\leq p\leq k$ be centered Gaussian random vectors with $\mathbb{E}z_{p,i}z_{p,i'}=\delta_{i,i'}(\xi'(q_{p+1})-\xi'(q_p))$ for any $i\neq i'$ and let $(\vz_p)_{0\leq p\leq k}$ be independent of each other. Starting from
\begin{align*}
X_{k+1}^M=\log\int_{S_M}\exp \veps\cdot \left(\vz _0+\cdots+\vz _k\right)d\lambda_M(\veps),
\end{align*}
we define decreasingly for $0\leq p\leq k,$
\begin{align*}
X_p^M=\frac{1}{m_p}\log\mathbb{E}_p \exp m_p X_{p+1}^M,
\end{align*}
where $\mathbb{E}_p$ denotes the expectation in the random vectors $(\vz_i)_{i\geq p}.$ When $m_p=0,$ this means $X_p^M=\mathbb{E}_p X_{p+1}^M.$ 
Set the Parisi functional $$
\mathcal{P}_M(x)=X_0^M-\frac{1}{2}\sum_{1\leq p\leq k}m_p(\theta(q_{p+1})-\theta(q_p)).
$$
A series of nontrivial applications of elementary large deviation principle implies $\mathcal{P}(\vx)=\lim_{M\rightarrow\infty}\mathcal{P}_M(\vx)$ (see Proposition 3.1 \cite{Tal061}). Let us denote the partition function of the model by $$
Z_N=\int_{S_N}\exp H_N(\vsi)d\lambda_N(\vsi).
$$
\begin{theorem}[The Parisi formula]\label{thm1} We have
\begin{align}\label{thm1:eq1}
\lim_{N\rightarrow\infty}\frac{1}{N}\mathbb{E}\log Z_N=\inf_{\vx}\mathcal{P}(\vx),
\end{align}
where the infimum is taken over all possible choices of $\vx$.
\end{theorem}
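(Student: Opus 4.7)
The plan is to bound $\lim_N N^{-1}\mathbb{E}\log Z_N$ from above and below by $\inf_\vx\mathcal{P}(\vx)$. The upper bound comes from Guerra's replica-symmetry-breaking interpolation: in the mixed even $p$-spin spherical case this was established in \cite{Tal061}, and a standard vanishing-perturbation argument (replacing the Hamiltonian by $H_N$ plus a small even mixture to restore the convexity of $\xi$ needed along the interpolation, then sending the perturbation to zero) extends it to mixtures that also contain odd $p$-spin interactions. The matching lower bound is the main content and will be carried out along the route of \cite{Pan11:1}, adapted to the sphere.

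The lower bound is obtained in three steps. First, the Aizenman--Sims--Starr scheme (Theorem \ref{ASS:prop1}) gives
\[
\liminf_{N\to\infty}\frac{1}{N}\mathbb{E}\log Z_N \;\geq\; \inf_\zeta\Lambda(\zeta),
\]
where the infimum runs over the possible weak limits $\zeta$ of the (perturbed) asymptotic Gibbs measures and $\Lambda$ is the A.S.S. cavity functional. Second, one perturbs $H_N$ by a small mixed $p$-spin Gaussian term so that every such $\zeta$ satisfies the Ghirlanda--Guerra identities; Panchenko's theorem \cite{Pan11:4} then yields ultrametricity of the support of $\zeta$, so $\zeta$ can be represented by a Ruelle (Poisson--Dirichlet) cascade encoded by a distribution function $\vx$ on $[0,1]$. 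Third, evaluating $\Lambda(\zeta)$ on such a cascade should reproduce, for any fixed cavity dimension $M$, the recursion $X_{k+1}^M\to X_0^M$ defining $\mathcal{P}_M(\vx)$ together with the combinatorial correction $-\tfrac{1}{2}\sum_p m_p(\theta(q_{p+1})-\theta(q_p))$. Letting $M\to\infty$ and invoking the convergence $\mathcal{P}_M(\vx)\to\mathcal{P}(\vx)$ recorded above (Proposition 3.1 of \cite{Tal061}) gives $\liminf_N N^{-1}\mathbb{E}\log Z_N\geq\mathcal{P}(\vx)$; optimizing over $\vx$ and removing the perturbation by continuity completes the lower bound.

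The hardest step, and the new difficulty of the spherical model relative to the Ising case, is the establishment of the A.S.S. scheme itself, because $\lambda_N$ is not a product measure: the one-coordinate cavity decomposition $\{-1,+1\}^{N+1}=\{-1,+1\}^N\times\{-1,+1\}$ available on the hypercube has no direct analog on $S_{N+1}$. My plan here is a decoupling: write a generic point of $S_{N+1}$, up to an explicit radial Jacobian, in the form $\bigl(\sqrt{1-\|\veps\|^2/(N+1)}\,\vsi',\,\veps\bigr)$ with $\vsi'\in S_N$ and $\veps\in\mathbb{R}^M$ a fixed-dimensional cavity vector, and then compare $H_{N+1}$ on $S_{N+1}$, via a Gaussian interpolation, to the sum of an independent cavity field in $\veps$ and a Hamiltonian of type $H_N$ in $\vsi'$. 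Provided the resulting covariance discrepancy can be controlled uniformly in $N$, the A.S.S. representation follows from the usual $\log(Z_{N+1}/Z_N)$ identity integrated against the cavity piece. A secondary difficulty is Step 3: because the "external" integral on the sphere is $\int_{S_M}\exp\bigl(\veps\cdot(\vec z_0+\cdots+\vec z_k)\bigr)d\lambda_M(\veps)$ rather than a product Bernoulli integral, matching $\Lambda(\zeta)$ to the Parisi formula must be done at the level of the finite-$M$ functional $\mathcal{P}_M(\vx)$, with the passage $M\to\infty$ handled separately through the large deviation argument recalled before the theorem.
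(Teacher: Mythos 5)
Your roadmap (Guerra upper bound; A.S.S.~lower bound via perturbation, Ghirlanda--Guerra identities, Panchenko's ultrametricity, and Poisson--Dirichlet cascades) matches the paper's overall strategy, including the correct observation that the sphere forces one to work with a cavity block of size $M$ and send $M\to\infty$. However, there are two genuine gaps.

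First, the mechanism you propose for the upper bound is wrong. Adding a small even $p$-spin mixture to $H_N$ and letting the perturbation vanish does not restore convexity of $\xi$ on $[-1,1]$: if $\xi$ contains an odd term with a positive coefficient, then $\xi''<0$ somewhere on $(-1,0)$, and a vanishingly small even addition cannot change the sign of $\xi''$ there. The correct route, which the paper uses, is Talagrand's positivity principle on the overlap (itself a consequence of the Ghirlanda--Guerra identities obtained by a small perturbation): after perturbation the overlaps concentrate on $[0,1]$, where $\xi$ \emph{is} convex because all $\beta_p\geq 0$, and then Guerra's interpolation goes through. The perturbation is used to control the overlap sign, not the shape of $\xi$.

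Second, your Step 3 hides the crucial uniformity issue that the paper singles out as the main new difficulty. You write ``evaluating $\Lambda(\zeta)$ on such a cascade should reproduce \ldots $\mathcal{P}_M(\vx)$ \ldots Letting $M\to\infty$ and invoking $\mathcal{P}_M(\vx)\to\mathcal{P}(\vx)$ gives the lower bound,'' treating $\vx$ as fixed while $M$ increases. But the asymptotic Gibbs measure, and therefore the distribution function you extract from it, \emph{depends on $M$}: for each cavity size $M$ one must choose a different perturbation parameter $u^{M,N}$ and a different subsequence, producing a family $(\vx^M)_{M\geq1}$ rather than a single $\vx$. You cannot apply $\mathcal{P}_M(\vx)\to\mathcal{P}(\vx)$ along a moving target $\vx^M$ without some uniform control. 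The paper's resolution has two ingredients you omit: (i) the family of finite-$M$ Parisi functionals $(\mathcal{P}_M)_{M\geq1}$ is uniformly $\tfrac{\xi'(1)}{2}$-Lipschitz with respect to the $L^1$ metric on distribution functions (the spherical analogue of Guerra's Lipschitz estimate), and (ii) Helly's selection theorem, which extracts a weakly convergent subsequence $\vx^M\to\vx^\infty$; one then discretizes $\vx^\infty$ to $\vx_k^\infty$ and uses the uniform Lipschitz bound to compare $\mathcal{P}_M(\vx^M)$ with $\mathcal{P}_M(\vx_k^\infty)$ before letting $M\to\infty$ and $k\to\infty$. Without the uniform Lipschitz estimate, the argument stalls exactly at the point you treat as routine.

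A smaller remark: for the A.S.S. step, your decoupling of $S_{M+N}$ into $(\vsi',\veps)$ with a radial Jacobian is in the right spirit, but the paper's Lemma 3 gives a more careful iterated coordinate-by-coordinate formula (each cavity coordinate has its own rescaling $a_j$), and the interpolation must compare not only the cavity field in $\veps$ but also the bulk Hamiltonian on the rescaled sphere, and it must absorb the error terms $\gamma(\vrho)$ with two or more cavity factors. The restriction of $\veps$ to the thin shell $A_\delta$ and the resulting $-\delta\xi'(1)+M^{-1}\log\nu_M(A_\delta)$ error term in the statement of the scheme are also not mentioned in your sketch and are needed to keep the interpolation controlled.
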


\noindent 
The quantity in the limit on the left-hand side of $(\ref{thm1:eq1})$ is usually called the free energy of this model and the formula on the right-hand side of $(\ref{thm1:eq1})$ is the famous Parisi formula in the spherical form, which is also known to have the Crisanti-Sommers representation \cite{CS92} (see Theorem 4.1 in \cite{Tal061}). Of course, one may also consider the model in the present of external field and prove the equation $(\ref{thm1:eq1}),$  but for the clarity of our discussion, we will only consider the Hamiltonian of the form $(\ref{eq2}).$

\smallskip

\noindent{\bf Acknowledgement.} The author would like to thank Dmitry Panchenko for motivating this work and his advisor, Michael Cranston, for many helpful discussions. Special thanks are due to Alexander Vandenberg-Rodes for several helpful suggestions regarding the presentation of the paper.


\section{The A.S.S. scheme for spherical models.}

\noindent The main result of this section is the construction of the A.S.S. scheme for spherical models. For $M,N\geq 1$, let us denote by $(\vz(\vsi)=(z_1(\vsi),\ldots,z_M(\vsi)):\vsi\in S_N)$ and $(y(\vsi):\vsi\in S_N)$ two independent families of centered Gaussian random variables with covariances
\begin{align}
\begin{split}\label{ASS:eq0.1}
\mathbb{E}z_i(\vsi^1)z_{i'}(\vsi^2)&=\delta_{i,i'}\xi'(R_{1,2})
\end{split}\\
\begin{split}\label{ASS:eq0.2}
\mathbb{E}y(\vsi^1)y(\vsi^2)&=\theta(R_{1,2})
\end{split}
\end{align}
for every $\vsi^1,\vsi^2\in S_N.$ These random variables are independent of all the other randomness. Let $G_{M,N}$ be the Gibbs measure corresponding to the Hamiltonian 
\begin{align}
\label{ASS:eq1}
H_{M,N}(\vsi)&=\sum_{p\geq 1}\frac{\beta_p}{(M+N)^{(p-1)/2}}\sum_{1\leq i_1,\ldots,i_p\leq N}g_{i_1,\ldots,i_{p}}\sigma_{i_1}\cdots\sigma_{i_p}
\end{align}
with reference measure $\lambda_N$ on $S_N$ and let $\left<\cdot\right>_{M,N}$ be the Gibbs average of $G_{M,N}.$
The following is our main result.

\begin{theorem}[The A.S.S. scheme]
\label{ASS:prop1}
For any $\delta>0$ and $M\geq 1,$ the lower limit of $N^{-1}\mathbb{E}\log Z_N$ is bounded from below by
\begin{align}
\begin{split}\label{ASS:prop1:eq1}
&\frac{1}{M}\liminf_{N\rightarrow\infty}\left(\mathbb{E}\log \int_{S_M}\left<\exp \veps\cdot \vz (\vsi)\right>_{M,N}d\lambda_M(\veps)-
\mathbb{E}\log \left<\exp \sqrt{M}y(\vsi)\right>_{M,N}\right)\\
&\qquad\qquad -\delta\xi'(1)+\frac{1}{M}\log \nu_M(A_\delta),
\end{split}
\end{align}
where $\nu_M$ is the probability measure of the $M$-dimensional Gaussian density and 
$$
A_\delta=\{\veps\in\mathbb{R}^M:M\leq\|\veps\|^2\leq M(1+\delta)\}.
$$
\end{theorem}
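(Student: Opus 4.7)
My strategy is a cavity-style ``add-$M$-coordinates'' argument. First, using the telescoping identity $\mathbb{E}\log Z_N=\sum_{k<N}(\mathbb{E}\log Z_{k+1}-\mathbb{E}\log Z_k)$ together with Gaussian concentration of $|\mathbb{E}\log Z_{N+1}-\mathbb{E}\log Z_N|$, a standard Ces\`aro-type reduction shows it suffices to prove, for each fixed $M\geq 1$, that
$$
\liminf_{N\to\infty}\frac{1}{M}\bigl(\mathbb{E}\log Z_{N+M}-\mathbb{E}\log Z_N\bigr)
$$
is bounded below by the right-hand side of $(\ref{ASS:prop1:eq1})$.

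Next I lower-bound $Z_{N+M}$ by restricting integration over $S_{N+M}$ to the set where the last $M$ coordinates lie in $A_\delta$, and then change variables so that the first $N$ coordinates are written as $\alpha(\veps)\vsi$ with $\vsi\in S_N$ and $\alpha(\veps)=\sqrt{(N+M-\|\veps\|^2)/N}$. The disintegration of $\lambda_{N+M}$ produces a Jacobian proportional to $(N+M-\|\veps\|^2)^{(N-2)/2}$; a Laplace-type expansion shows that, as $N\to\infty$ with $M$ fixed, the projected marginal of $\lambda_{N+M}$ on $\mathbb{R}^M$ converges to the standard Gaussian $\nu_M$, and in particular the restricted mass tends to $\nu_M(A_\delta)$. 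This supplies the $M^{-1}\log\nu_M(A_\delta)$ summand.

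I then perform a Gaussian comparison between the fields $H_{N+M}(\alpha\vsi,\veps)$ and $H_{M,N}(\vsi)+\veps\cdot\vz(\vsi)$ on $(\vsi,\veps)\in S_N\times A_\delta$. Using $\mathbb{E}H_{N+M}(\tau^1)H_{N+M}(\tau^2)=(N+M)\xi(R_{1,2}^\tau)$ and a Taylor expansion of $\xi$ around $R_{1,2}^\sigma$, both the diagonal and off-diagonal covariances agree up to a term uniformly bounded on $A_\delta\times A_\delta$ by $M\delta\xi'(1)+O(1/N)$, and Guerra's interpolation converts this into the $-\delta\xi'(1)$ correction in the limiting free energy. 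To replace the $A_\delta$-integration by the $S_M$-integration appearing in $(\ref{ASS:prop1:eq1})$, I use rotational symmetry of $\nu_M$ and the fact that $r\mapsto\mathbb{E}_{\hat\veps}\exp r\hat\veps\cdot\vz$ is increasing in $r^2$ (from its even-moment expansion), giving pointwise in $\vsi$,
$$
\int_{A_\delta}\exp\veps\cdot\vz(\vsi)\,\nu_M(d\veps)\ \geq\ \nu_M(A_\delta)\int_{S_M}\exp\veps\cdot\vz(\vsi)\,d\lambda_M(\veps).
$$
The remaining covariance identity $\mathbb{E}[H_{M,N}(\vsi^1)+\sqrt{M}y(\vsi^1)][H_{M,N}(\vsi^2)+\sqrt{M}y(\vsi^2)]=N\xi(R_{1,2}^\sigma)+o(1)=\mathbb{E}H_N(\vsi^1)H_N(\vsi^2)$ yields, via another Gaussian comparison, $\mathbb{E}\log Z_N=\mathbb{E}\log Z_{M,N}+\mathbb{E}\log\langle\exp\sqrt{M}y(\vsi)\rangle_{M,N}+o(1)$, and assembling the $\nu_M(A_\delta)$ factor, the $-\delta\xi'(1)$ correction, the cavity numerator, and the theta denominator then yields the stated bound.

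The main technical obstacle is the Gaussian comparison above. Unlike the Ising case, where $\Sigma_{N+M}=\Sigma_N\times\Sigma_M$ factorizes, the spherical constraint forces the scaling factor $\alpha(\veps)$, which enters as $\alpha^p$ in each $p$-spin piece of $H_{N+M}(\alpha\vsi,\veps)$ and couples $\vsi$ to $\veps$ nontrivially in both the diagonal and off-diagonal covariance computations. Bounding the resulting covariance mismatch uniformly in $\veps\in A_\delta$ by the clean quantity $M\delta\xi'(1)$ (rather than by something that blows up with $M$) is the technical core of the argument, and is precisely the ``certain decoupling of the configuration space'' alluded to in the introduction.
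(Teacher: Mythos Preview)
Your proposal is correct and follows essentially the same route as the paper: reduce to the increment $\mathbb{E}\log Z_{M+N}-\mathbb{E}\log Z_N$, disintegrate $\lambda_{M+N}$ over the last $M$ coordinates and approximate the resulting density by $\nu_M$ via Poincar\'e's lemma, perform Gaussian interpolations to compare $H_{M+N}$ on the restricted set with $H_{M,N}+\veps\cdot\vz$ (producing the $-\delta\xi'(1)$ term) and $H_N$ with $H_{M,N}+\sqrt{M}y$, and apply the monotonicity inequality $\int_{A_\delta}e^{\veps\cdot\vz}\,d\nu_M\geq\nu_M(A_\delta)\int_{S_M}e^{\veps\cdot\vz}\,d\lambda_M$. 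The only differences are cosmetic: the paper's reduction is a one-line elementary fact about $\liminf c_N/N$ rather than your telescoping/Ces\`aro argument, it peels cavity coordinates one at a time (with factors $a_\ell(\veps)$) rather than with your single scaling $\alpha(\veps)$, and the $\log\nu_M(A_\delta)$ summand actually emerges from the last inequality rather than from the density-convergence step, which contributes $o(1)$.
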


\smallskip

\noindent 
Note that a standard large deviation result yields $\lim_{M\rightarrow \infty}M^{-1}\log\nu_M(A_\delta)=0$ for every $\delta>0.$ Thus $(\ref{ASS:prop1:eq1})$ gives a lower bound for the lower limit of the free energy that will play an essential role in proving the Parisi formula in Section $\ref{proof}.$ Suppose that $\mu_N$ is the uniform probability measure on $\Sigma_N=\left\{-1,+1\right\}^N$. If we replace the measure space $(S_N,\lambda_N)$ by $(\Sigma_N,\mu_N)$ and ignore the error term $-\delta\xi'(1)+M^{-1}\log \nu_M(A_\delta)$ in $(\ref{ASS:prop1:eq1})$, then Theorem \ref{ASS:prop1} gives the A.S.S. scheme for the Ising spin glass model with general mixed $p$-spin interactions (see e.g. Section 15.8 \cite{Tal11}). 

We will take the spirit in \cite{ASS03} to establish the A.S.S. scheme. However, since uniform measure on the spherical configuration space in the spherical models is not a product measure, we have to seek a proper way to approximate this measure by another one that possesses a certain product structure. To deal with this obstacle, we will use the idea of Poincar\'{e}'s lemma (see page 77 \cite{ST93}) that the projection on any fixed $M$ coordinates of the uniform probability measure $\lambda_{M+N}$ on the sphere $S_{M+N}$ converge weakly to $M$-dimensional standard Gaussian measure as $N$ tends to infinity. The argument of our proof proceeds as follows. Recall the definition of $H_{M+N}$ from $(\ref{eq2})$. For $\vrho\in S_{M+N},$ let us write $\vrho=(\vsi,\veps)$ for the first $N$ coordinates $\vsi=(\sigma_1,\ldots,\sigma_N)$ and the last $M$ cavity coordinates $\veps=(\varepsilon_1,\ldots,\varepsilon_M)$ and write
\begin{align*}
H_{M+N}(\vrho)&=H_{M,N}(\vsi)+\sum_{i\leq M}\varepsilon_i Z_i(\vsi)+\gamma(\vrho),
\end{align*}
where $H_{M,N}$ is given in $(\ref{ASS:eq1}),$ the term $\varepsilon_iZ_i(\vsi)$ consists of all terms in $H_{M+N}$ with only one factor $\varepsilon_i$ from $\veps$ present, and the last term is the sum of terms with at least two factors in $\veps.$ From now on, without causing any ambiguity, for any given vectors $\vec{\omega},\vec{\omega}'\in\Bbb{R}^K$ for some $K\geq 1$, we define $R(\vec{\omega},\vec{\omega}')=K^{-1}\sum_{i\leq K}\omega_i\omega_i'.$ One can check easily that
\begin{align}
\begin{split}
\label{ASS:eq2.1}
\mathbb{E}H_{M,N}(\vsi^1)H_{M,N}(\vsi^2)&=(M+N)\xi\left(\frac{N}{M+N}R(\vsi^1,\vsi^2)\right),
\end{split}\\
\begin{split}
\label{ASS:eq2.2}
\mathbb{E}Z_i(\vsi^1)Z_i(\vsi^2)&=\xi'\left(\frac{N}{M+N}R(\vsi^1,\vsi^2)\right),
\end{split}\\
\begin{split}
\label{ASS:eq2.3}
\mathbb{E}\gamma(\vrho)^2&\leq \frac{M^2}{M+N}\xi''(1).
\end{split}
\end{align}
Let us notice an elementary fact that for a real-valued sequence $(c_N)$, if $\liminf_{N\rightarrow\infty}c_N/N=r\in\mathbb{R},$ then $M^{-1}\liminf_{N\rightarrow\infty}(c_{M+N}-c_N)\leq r$ for every $M\geq 1.$ This implies 
\begin{align}\label{ASS:main:eq0}
&\liminf_{N\rightarrow\infty}\frac{\mathbb{E}\log Z_N}{N}\geq\frac{1}{M}\liminf_{N\rightarrow\infty}\mathbb{E}\log \frac{Z_{M+N}}{Z_N}
\end{align}
and thus to obtain Theorem $\ref{ASS:prop1}$, it suffices to prove that $(\ref{ASS:prop1:eq1})$ is the lower bound of the right-hand side of $(\ref{ASS:main:eq0})$.
Let us write $\mathbb{E}\log{Z_{M+N}}/{Z_N}$ as
\begin{align}
\begin{split}\label{ASS:main:eq1}
&\mathbb{E}\log \int_{A_\delta}\left< \exp\veps\cdot \vz(\vsi)\right>_{M,N}d\nu_M(\veps)-\mathbb{E}\log\frac{\int_{S_N}\exp H_Nd\lambda_N(\vsi)}{\int_{S_N}\exp H_{M,N}d\lambda_N(\vsi)}\\
&+\mathbb{E}\log\frac{Z_{M+N}}{\int_{A_\delta}J_{M,N}d\nu_M(\veps)},
\end{split}
\end{align}
where  
\begin{align*}
J_{M,N}(\veps)&=\int_{S_N}\exp(H_{M,N}(\vsi)+\veps\cdot \vz (\vsi))d\lambda_N(\vsi),\quad\veps\in\mathbb{R}^M.
\end{align*}
In what follows, we prove several lemmas that will be used to bound these three terms in $(\ref{ASS:main:eq1})$ as $N$ tends to infinity.

\begin{lemma}
\label{ASS:lem5} For $\vz\in\mathbb{R}^M,$ we have
\begin{align}
\label{ASS:lem5:eq1}
\int_{A_\delta}\exp \veps\cdot\vz d\nu_M(\veps)\geq  
\nu_M\left(A_\delta\right)\int_{S_M}\exp\veps\cdot \vz d\lambda_M(\veps).
\end{align}
\end{lemma}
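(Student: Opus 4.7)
The plan is to exploit the polar decomposition of the standard Gaussian measure $\nu_M$: in spherical coordinates, $\nu_M$ factors into a radial density times the uniform surface measure on the unit sphere, the latter being (up to rescaling) precisely $\lambda_M$. The key observation will then be that the spherical average of $\exp(\veps\cdot\vz)$ over $\{\|\veps\|=r\}$ is a monotone function of the radius $r$.

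More precisely, for $\veps\in\mathbb{R}^M\setminus\{0\}$ I would write $\veps=r\vec{u}$ with $r=\|\veps\|$ and $\vec{u}\in S^{M-1}$. Then
$$
d\nu_M(\veps)=c_M\,e^{-r^2/2}\,r^{M-1}\,dr\,d\tilde\omega(\vec{u}),
$$
where $c_M$ is a normalization constant and $\tilde\omega$ is the normalized uniform probability measure on $S^{M-1}$. The pushforward of $\tilde\omega$ under $\vec{u}\mapsto\sqrt{M}\vec{u}$ is $\lambda_M$, so if we set
$$
h(r):=\int_{S^{M-1}}\exp(r\,\vec{u}\cdot\vz)\,d\tilde\omega(\vec{u}),\qquad r\geq 0,
$$
then $\int_{S_M}\exp(\veps\cdot\vz)\,d\lambda_M(\veps)=h(\sqrt{M})$, while
$$
\int_{A_\delta}\exp(\veps\cdot\vz)\,d\nu_M(\veps)=\int_{\sqrt{M}}^{\sqrt{M(1+\delta)}}c_M\,e^{-r^2/2}\,r^{M-1}\,h(r)\,dr
$$
and $\nu_M(A_\delta)=\int_{\sqrt{M}}^{\sqrt{M(1+\delta)}}c_M\,e^{-r^2/2}\,r^{M-1}\,dr$.

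The heart of the argument is the claim that $h$ is nondecreasing on $[0,\infty)$. Since $\tilde\omega$ is invariant under $\vec{u}\mapsto-\vec{u}$, averaging the contributions of $\vec{u}$ and $-\vec{u}$ yields
$$
h(r)=\int_{S^{M-1}}\cosh(r\,\vec{u}\cdot\vz)\,d\tilde\omega(\vec{u}),
$$
and for every fixed $x\in\mathbb{R}$ the map $r\mapsto\cosh(rx)$ is nondecreasing on $[0,\infty)$. Therefore $h(r)\geq h(\sqrt{M})$ for all $r\in[\sqrt{M},\sqrt{M(1+\delta)}]$, and pulling $h(\sqrt{M})$ out of the radial integral yields the desired bound. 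There is no real obstacle here; the one trick to notice is the $\vec{u}\mapsto-\vec{u}$ symmetry that reveals $h$ as an average of $\cosh$'s, from which monotonicity in $r$ is immediate.
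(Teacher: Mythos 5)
Your proposal is correct and takes essentially the same route as the paper: polar decomposition of the Gaussian measure, the $\vec{u}\mapsto-\vec{u}$ symmetry to rewrite the spherical average as an integral of $\cosh$, and monotonicity of $r\mapsto\cosh(rx)$ on $[0,\infty)$ to pull the $r=\sqrt{M}$ value out of the radial integral. The paper phrases the decomposition abstractly as $\nu_M$ being the image of $\gamma_M\times\lambda_M$ under $(s,\veps)\mapsto s\veps$, whereas you write the radial density explicitly, but the argument is identical.
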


\begin{proof}
Using the rotational invariance of $\nu_M,$ there  exists a probability measure $\gamma_M$ on $\mathbb{R}^+$ such that
$\nu_M$ is the image of $\gamma_M\times \lambda_M$ under the map $(s,\veps)
\mapsto s\veps.$ For $1\leq s\leq (1+\delta)^{1/2}$, using symmetry,
\begin{align*}
\int \exp s\veps\cdot\vz d\lambda_M(\veps)&=\int \cosh(s\veps\cdot\vz)d\lambda_M(\veps)\\
&\geq \int \cosh(\veps\cdot\vz)d\lambda_M(\veps)=\int \exp \veps\cdot\vz d\lambda_M(\veps).
\end{align*} 
Therefore, $(\ref{ASS:lem5:eq1})$ holds by
\begin{align*}
\int_{A_\delta}\exp \veps\cdot\vz d\nu_M(\veps)&=\int_{1}^{(1+\delta)^{1/2}}\int_{S_M}\exp s\veps\cdot \vz d\lambda_M(\veps)d\gamma_M(s)\\
&\geq\int_{1}^{(1+\delta)^{1/2}}\int_{S_M}\exp\veps\cdot \vz d\lambda_M(\veps)d\gamma_M(s)\\
&= \gamma_M(\{1\leq s\leq (1+\delta)^{1/2}\})\int_{S_M}\exp\veps\cdot \vz d\lambda_M(\veps).
\end{align*}
\end{proof}

\noindent From this lemma and Fubini's theorem, the first term in $(\ref{ASS:main:eq1})$ is bounded from below by 
\begin{align}
\label{ASS:lem5.1:eq1}
\mathbb{E}\log\int_{A_\delta}\left<\exp \veps\cdot\vz(\vsi)\right>_{M,N}d\lambda_M(\veps)+\log \nu_M(A_\delta)
\end{align}
Using the standard Gaussian interpolation technique, Lemma $\ref{ASS:lem4}$ below takes care of the second term in $(\ref{ASS:main:eq1})$.

\begin{lemma}
\label{ASS:lem4}
We have
\begin{align}
\label{ASS:lem4:eq1}
\lim_{N\rightarrow\infty}\left|\mathbb{E}\log\frac{ \int_{S_N}\exp H_{N}(\vsi)d\lambda_N(\vsi)}{\int_{S_N}\exp H_{M,N}(\vsi)d\lambda_N(\vsi)}
-\mathbb{E}\log \left<\exp\sqrt{M}y(\vsi)\right>_{M,N}\right|=0.
\end{align}
\end{lemma}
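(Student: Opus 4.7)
The plan is to establish (\ref{ASS:lem4:eq1}) via a Guerra-type Gaussian interpolation between $H_N(\vsi)$ and $H_{M,N}(\vsi)+\sqrt{M}\,y(\vsi)$. Since the expression in (\ref{ASS:lem4:eq1}) depends only on the marginal laws of $H_N$, $H_{M,N}$, and $y$ (and $y$ is already independent of all other randomness by construction), I first replace $H_N$ by an independent copy with the same law, so that all three processes are mutually independent and can be coupled in a single interpolant. Setting
\[H_t(\vsi) = \sqrt{t}\,H_N(\vsi) + \sqrt{1-t}\bigl(H_{M,N}(\vsi)+\sqrt{M}\,y(\vsi)\bigr),\quad 0\leq t\leq 1,\]
and $\varphi(t)=\mathbb{E}\log\int_{S_N}\exp H_t(\vsi)\,d\lambda_N(\vsi)$, the independence of $y$ and $H_{M,N}$ lets me factor the $t=0$ integrand and obtain $\varphi(0)=\mathbb{E}\log\int_{S_N}\exp H_{M,N}\,d\lambda_N + \mathbb{E}\log\bigl\langle\exp\sqrt{M}y(\vsi)\bigr\rangle_{M,N}$. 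Consequently $\varphi(1)-\varphi(0)$ is precisely the quantity inside the absolute value of (\ref{ASS:lem4:eq1}), and it suffices to show $|\varphi(1)-\varphi(0)|\to 0$ as $N\to\infty$.

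The second step is to differentiate $\varphi$ via the standard Gaussian integration by parts identity for such interpolations, which yields
\[\varphi'(t) = \tfrac{1}{2}\mathbb{E}[C(\vsi,\vsi)] - \tfrac{1}{2}\mathbb{E}\bigl\langle C(\vsi^1,\vsi^2)\bigr\rangle_t,\]
where $\langle\cdot\rangle_t$ denotes the two-replica Gibbs average associated with $H_t$, and by (\ref{ASS:eq0.2}) and (\ref{ASS:eq2.1}) the covariance difference is
\[C(\vsi^1,\vsi^2) = N\xi(R_{1,2}) - (M+N)\xi\!\left(\tfrac{N}{M+N}R_{1,2}\right) - M\theta(R_{1,2}).\]
The whole problem thus reduces to a uniform bound on $|C|$ over $S_N\times S_N$.

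This bound is the only real technical step, and it is here that the choice of $\theta$ pays off. A second-order Taylor expansion of $\xi$ at $R_{1,2}$ gives, for some $\zeta$ between $\tfrac{N}{M+N}R_{1,2}$ and $R_{1,2}$,
\[\xi\!\left(\tfrac{N}{M+N}R_{1,2}\right) = \xi(R_{1,2}) - \tfrac{M}{M+N}R_{1,2}\xi'(R_{1,2}) + \tfrac{M^2}{2(M+N)^2}R_{1,2}^2\xi''(\zeta).\]
Multiplying by $M+N$ and using $\theta(x)=x\xi'(x)-\xi(x)$, the zeroth- and first-order terms cancel exactly, leaving $C(\vsi^1,\vsi^2) = -\tfrac{M^2}{2(M+N)}R_{1,2}^2\xi''(\zeta)$. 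Since $|R_{1,2}|\leq 1$ and $|\xi''(\zeta)|\leq \xi''(1)$ (because $\xi''$ is a power series with nonnegative coefficients and $|\zeta|\leq 1$), I get $|C|\leq \frac{M^2\xi''(1)}{2(M+N)}$, whence $|\varphi'(t)|\leq \frac{M^2\xi''(1)}{2(M+N)}$ uniformly in $t$; integrating over $[0,1]$ yields $|\varphi(1)-\varphi(0)|\leq \frac{M^2\xi''(1)}{2(M+N)}\to 0$. I do not anticipate any genuine obstacle; the one subtlety worth flagging is recognizing that $\sqrt{M}\,y(\vsi)$, with covariance $M\theta(R_{1,2})$, is precisely designed to cancel the first-order Taylor term in the covariance difference, leaving the clean $O(M^2/N)$ remainder above.
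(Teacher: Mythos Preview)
Your proof is correct. Both your argument and the paper's rest on the same mechanism---Gaussian interpolation and integration by parts, reducing everything to a uniform bound on a covariance difference that is $O(M^2/N)$---but the interpolations are set up differently. The paper first observes that one can write $H_N \stackrel{d}{=} H_{M,N} + \sqrt{M}\,y'$ for an independent process $y'$ with covariance $\sum_{p\ge 1}\beta_p^2 D_p N^p R_{1,2}^p$, and then interpolates between $y'$ and $y$ \emph{inside} the fixed Gibbs average $\langle\cdot\rangle_{M,N}$; this keeps $H_{M,N}$ as the reference measure throughout and reduces the derivative bound to $\sup|\mathbb{E}y'y'-\mathbb{E}yy|$. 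You instead interpolate at the free-energy level between $H_N$ (via an independent copy) and $H_{M,N}+\sqrt{M}y$, and then use a Taylor expansion of $\xi$ to see that $C(\vsi^1,\vsi^2)=N\xi(R_{1,2})-(M+N)\xi(\tfrac{N}{M+N}R_{1,2})-M\theta(R_{1,2})$ is $O(M^2/N)$. Your route is slightly more direct in that it does not require constructing $y'$ (and hence does not need the auxiliary observation that the covariance of $H_N$ dominates that of $H_{M,N}$ term by term); the paper's route has the minor expository advantage of keeping the Gibbs measure $G_{M,N}$ fixed during the interpolation.
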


\begin{proof}
Suppose $(y'(\vsi):\vsi\in S_N)$ is a family of centered Gaussian random variables independent of all the other randomness such that in distribution
\begin{align*}
H_N(\vsi)=H_{M,N}(\vsi)+\sqrt{M}y'(\vsi).
\end{align*}
It is easy to check that 
$\mathbb{E}y'(\vsi^1)y'(\vsi^2)=\sum_{p\geq 1}\beta_p^2D_pN^pR(\vsi^1,\vsi^2)^{p}$, where $$
D_{p}=\frac{1}{M}\left(\frac{1}{N^{p-1}}-\frac{1}{(M+N)^{p-1}}\right)
$$ 
satisfies $N^pD_{p}\rightarrow p-1.$ Recall $y(\vsi)$ from $(\ref{ASS:eq0.2}).$ Let us consider the interpolating free energy
\begin{align*}
\psi(t)&=\mathbb{E}\log \left<\exp \sqrt{M}\left(\sqrt{t}y'(\vsi)+\sqrt{1-t}{y}(\vsi)\right)\right>_{M,N}.
\end{align*}
Then $|\psi(1)-\psi(0)|$ gives the quantity in the limit on the left-hand side of $(\ref{ASS:lem4:eq1})$. Thus it suffices to prove that $\psi'(t)\rightarrow 0$ uniformly in $t\in (0,1)$ as $N\rightarrow\infty,$ which can be easily verified by using Gaussian integration by parts, as $N\rightarrow\infty,$
\begin{align*}
\sup_{0<t<1}|\psi'(t)|&\leq M\sup_{\vsi^1,\vsi^2\in S_N}|\mathbb{E}y'(\vsi^1)y'(\vsi^2)-\mathbb{E}y(\vsi^1)y(\vsi^2)|\\
&=M\sup_{\vsi^1,\vsi^2\in S_N}\left|\sum_{p\geq 1}\beta_p^2(D_{p}N^p-(p-1))R(\vsi^1,\vsi^2)^p\right|\rightarrow 0.
\end{align*}
\end{proof}

\noindent Let us continue to find a lower bound for the lower limit of the third term in $(\ref{ASS:main:eq1})$. 
It relies on an elementary lemma that allows us to decouple the surface measure $(S_{M+N},\lambda_{M+N})$ as the product measure $(S_N\times A_\delta,\lambda_N\times F_M)$ asymptotically.
For $K\geq 1,$ we denote by $S_K^1$ the unit sphere in $\mathbb{R}^K$ and by $|S_K^1|$ the area of $S_K^1$.
Set
\begin{align*}
A_{M,N}&=\prod_{j=1}^M\left[-\sqrt{M+N+1-j},\sqrt{M+N+1-j}\right],\\
b_{M,N}&=\prod_{j=1}^M\frac{|S_{M+N-j}^1|}{|S_{M+N+1-j}^1|\sqrt{M+N+1-j}}.
\end{align*}
Define
\begin{align*}
F_{M,N}(\veps)&=b_{M,N}\prod_{j=1}^M\left(1-\frac{\varepsilon_j^2}{M+N+1-j}\right)^{\frac{M+N-j-2}{2}},\quad\veps\in A_{M,N},\\
F_M(\veps)&=\left(\frac{1}{2\pi}\right)^{M/2}\exp\left(-\frac{\|\veps\|^2}{2}\right),\quad\veps\in \mathbb{R}^M.
\end{align*}
Define $a_1=1$ and for $2\leq \ell\leq M+1,$
$$
a_\ell(\veps)=\prod_{j=1}^{\ell-1}\sqrt{1+\frac{1-\varepsilon_j^2}{M+N-j}},\quad \veps\in A_{M,N}.
$$

\begin{lemma}
\label{ASS:lem1}
Suppose that $f$ is a nonnegative function defined on $S_{M+N}.$ 
Then we have 
\begin{align}\label{ASS:lem1:eq1}
\int_{S_{M+N}}f(\vrho)d\lambda_{M+N}(\vrho)
&=\int_{A_{M,N}}
F_{M,N}(\veps)d\boldsymbol{\varepsilon}\int_{S_N}f\left(\vsi a_{M+1},\varepsilon_{1}a_{1},\ldots,\varepsilon_Ma_M\right)d\lambda_{N}(\vsi).
\end{align}
\end{lemma}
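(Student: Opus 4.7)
The plan is to prove \eqref{ASS:lem1:eq1} by induction on $M$, peeling off one cavity coordinate at a time via the classical disintegration of the uniform measure on a sphere.

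For the base case $M=1$, the starting point is the standard Poincar\'e-type slicing of $\lambda_{N+1}$: fixing the last coordinate $\rho_{N+1} = \varepsilon_1 \in [-\sqrt{N+1},\sqrt{N+1}]$, the conditional distribution on the slice is uniform on a sphere of radius $\sqrt{N+1-\varepsilon_1^2}$ in $\mathbb{R}^N$, which upon rescaling all $N$ remaining coordinates by $a = \sqrt{(N+1-\varepsilon_1^2)/N}$ becomes $\lambda_N$. One checks that $a = a_2(\varepsilon_1)$ and that the marginal surface density of $\rho_{N+1}$ computed from the $(N-1)$-dimensional slice area divided by the total area of $S_{N+1}$ equals exactly $F_{1,N}(\varepsilon_1)$. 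Together with $a_1 = 1$, this proves the lemma for $M=1$.

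For the inductive step, assuming the lemma for cavity dimension $M-1$ (and arbitrary $N$), I would apply the base case to peel off $\rho_{N+1}$ from $S_{M+N}$: this yields the marginal density $p(\varepsilon_1) = \frac{|S_{M+N-1}^1|}{|S_{M+N}^1|\sqrt{M+N}}\bigl(1-\varepsilon_1^2/(M+N)\bigr)^{(M+N-3)/2}$ (which is precisely the $j=1$ factor of $F_{M,N}$) together with a rescaling by $a = \sqrt{1+(1-\varepsilon_1^2)/(M+N-1)}$ applied to all other $M+N-1$ coordinates, yielding a point in $S_{M+N-1}$. Applying the inductive hypothesis to this inner integral with parameters $(M-1,N)$ peels off the remaining $M-1$ cavity coordinates, contributing densities $F_{M-1,N}$ and further rescalings $a_\ell'(\veps')$.

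The only real work is to verify that everything composes correctly after the reindexing $\varepsilon_j = \varepsilon'_{j-1}$ for $j \geq 2$: one checks that $a \cdot a'_{j-1}(\veps') = a_j(\veps)$ for $2 \le j \le M$ and $a \cdot a'_M(\veps') = a_{M+1}(\veps)$ by telescoping the product definitions, and that $p(\varepsilon_1) \cdot F_{M-1,N}(\veps') = F_{M,N}(\veps)$ by matching factors. The latter reduces to the identity $b_{M,N} = \frac{|S_{M+N-1}^1|}{|S_{M+N}^1|\sqrt{M+N}} \cdot b_{M-1,N}$ and the index shift $j \mapsto j-1$ aligning the denominators $M+N+1-j$ and the exponents $(M+N-j-2)/2$ on both sides. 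I do not expect a real obstacle here; the argument is a routine bookkeeping exercise where the main care is in tracking the reindexing of the cavity coordinates through the inductive step.
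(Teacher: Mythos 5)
Your proposal is correct and follows essentially the same route as the paper, which simply cites the one-step slicing identity
\[
\int_{S_K}f(\vrho)\,d\lambda_K(\vrho)=\frac{|S_{K-1}^1|}{|S_K^1|\sqrt{K}}\int_{-\sqrt{K}}^{\sqrt{K}}\Bigl(1-\frac{\varepsilon^2}{K}\Bigr)^{\frac{K-3}{2}}d\varepsilon\int_{S_{K-1}}f\Bigl(\vsi\sqrt{\tfrac{K-\varepsilon^2}{K-1}},\varepsilon\Bigr)d\lambda_{K-1}(\vsi)
\]
and invokes induction. The one place that requires care is which coordinate to peel off: the paper's base formula peels off the last coordinate, while the lemma's target form has the unscaled variable ($a_1=1$) sitting in position $N+1$. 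You peel off $\rho_{N+1}$ first, using the rotation invariance of $\lambda_{M+N}$ to apply the slicing to a non-terminal coordinate; this is the right choice, since then the factor $a$ from the first step multiplies only the surviving coordinates and the cumulative products telescope to give $a\cdot a'_{j-1}(\veps')=a_j(\veps)$ and $a\cdot a'_M(\veps')=a_{M+1}(\veps)$, while the density factors and the constant $b_{M,N}=\frac{|S_{M+N-1}^1|}{|S_{M+N}^1|\sqrt{M+N}}\,b_{M-1,N}$ match exactly as you indicate. (Had you peeled off $\rho_{M+N}$ instead, the relation $\rho_{M+N}=\varepsilon_M a_M(\veps)$ would force an extra change of variables, or, equivalently, a final permutation reversing the cavity block.) The base case and the bookkeeping you describe both check out.
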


\begin{proof}
This is an elementary calculus formula that follows by an induction argument starting from  
\begin{align*}
\int_{S_K}f(\vrho)d\lambda_K(\vrho)&=\frac{|S_{K-1}^1|}{|S_K^1|\sqrt{K}}\int_{-\sqrt{K}}^{\sqrt{K}}\left(1-\frac{\varepsilon^2}{K}\right)^{\frac{K-3}{2}}d\varepsilon \int_{S_{K-1}}f\left(\vsi \sqrt{\frac{K-\varepsilon^2}{K-1}},\varepsilon\right)d\lambda_{K-1}(\vsi)
\end{align*}
for every $K\geq 2.$
\end{proof}

\noindent As an immediate consequence of Lemma \ref{ASS:lem1}, the cavity coordinates converge weakly to the $M$-dimensional standard Gaussian random vector as $N$ tends to infinity. We write $\mathbb{E}\log {Z_{M+N}}/{\int_{A_\delta}J_{M,N}d\nu_M(\veps)}$ as
\begin{align}\label{sec2:eq1}
&\mathbb{E}\log \frac{Z_{M+N}}{\int_{A_\delta}F_{M,N}J_{M,N}d\veps}+\mathbb{E}\log \frac{\int_{A_\delta}F_{M,N}J_{M,N}d\veps}{\int_{A_\delta}F_MJ_{M,N}d\veps}.
\end{align}
Then the first term of $(\ref{sec2:eq1})$ can be controlled by the following lemma.
  
\begin{lemma}\label{ASS:lem2} We have 
\begin{align}
\label{ASS:lem2:eq1}
&\liminf_{N\rightarrow\infty}\mathbb{E}\log \frac{Z_{M+N}}{\int_{A_\delta}F_{M,N}J_{M,N}d\veps}
\geq  -M\delta\xi'(1).
\end{align}
\end{lemma}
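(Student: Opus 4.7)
The plan is to combine Lemma \ref{ASS:lem1} with Gaussian interpolation. By Lemma \ref{ASS:lem1} applied to $f=\exp H_{M+N}$, together with $F_{M,N}\geq 0$ and $A_\delta\subset A_{M,N}$ for large $N$, one has
$$Z_{M+N}\geq\int_{A_\delta}F_{M,N}(\veps)\,\tilde J_{M,N}(\veps)\,d\veps,\qquad \tilde J_{M,N}(\veps):=\int_{S_N}\exp H_{M+N}(\vsi a_{M+1},\varepsilon_1 a_1,\ldots,\varepsilon_M a_M)\,d\lambda_N(\vsi).$$
It therefore suffices to show that the lower limit of $\mathbb{E}\log\int_{A_\delta}F_{M,N}\tilde J_{M,N}\,d\veps-\mathbb{E}\log\int_{A_\delta}F_{M,N}J_{M,N}\,d\veps$ is at least $-M\delta\xi'(1)$.

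To this end, let $\hat H_{M+N}$ be an independent copy of $H_{M+N}$ (also independent of $\vz$), and introduce the interpolating free energy
$$\psi(t)=\mathbb{E}\log\int_{A_\delta}F_{M,N}(\veps)\int_{S_N}\exp\Bigl(\sqrt{t}\,\hat H_{M+N}(\vsi a_{M+1},\varepsilon_1 a_1,\ldots,\varepsilon_M a_M)+\sqrt{1-t}\,(H_{M,N}(\vsi)+\veps\cdot\vz(\vsi))\Bigr)\,d\lambda_N(\vsi)\,d\veps$$
for $t\in[0,1]$. Since $\hat H_{M+N}$ has the same law as $H_{M+N}$, $\psi(1)-\psi(0)$ equals the quantity above, and Gaussian integration by parts yields
$$\psi'(t)=\tfrac{1}{2}\mathbb{E}\langle C_1(x,x)-C_0(x,x)\rangle_t-\tfrac{1}{2}\mathbb{E}\langle C_1(x^1,x^2)-C_0(x^1,x^2)\rangle_t,$$
where $x=(\veps,\vsi)$, $\langle\cdot\rangle_t$ is the Gibbs average on $A_\delta\times S_N$ weighted by $F_{M,N}$ and the time-$t$ Hamiltonian, and $C_1,C_0$ are the covariances of the two Gaussian processes indexed by $(\veps,\vsi)$.

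The diagonal term is explicit: since the image point lies in $S_{M+N}$ by Lemma \ref{ASS:lem1}, $C_1(x,x)=(M+N)\xi(1)$, and by (\ref{ASS:eq2.1}), $C_0(x,x)=(M+N)\xi(N/(M+N))+\|\veps\|^2\xi'(1)$. A Taylor expansion of $\xi$ around $1$ gives $C_1(x,x)-C_0(x,x)=(M-\|\veps\|^2)\xi'(1)+O(1/N)$, which lies in $[-M\delta\xi'(1)+o(1),\,o(1)]$ on $A_\delta$. For the off-diagonal term, expanding $a_{M+1}(\veps)^2=\prod_{j=1}^M(1+(1-\varepsilon_j^2)/(M+N-j))$ yields $a_{M+1}(\veps)^2=1+(M-\|\veps\|^2)/N+O(1/N^2)$ uniformly on $A_\delta$, hence $(M+N)(a_{M+1}(\veps^1)a_{M+1}(\veps^2)-1)=(2M-\|\veps^1\|^2-\|\veps^2\|^2)/2+o(1)\in[-M\delta,0]+o(1)$. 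Combining this with analogous estimates for the $a_j$, $j\leq M$, a first-order Taylor expansion of $(M+N)\xi$, and the elementary bound $|\xi'|\leq\xi'(1)$ on $[-1,1]$ (valid since $\xi$ has nonnegative Taylor coefficients), one obtains $|C_1(x^1,x^2)-C_0(x^1,x^2)|\leq M\delta\xi'(1)+o(1)$ uniformly in $\vsi^1,\vsi^2\in S_N$ and $\veps^1,\veps^2\in A_\delta$.

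Combining the two estimates gives $\psi'(t)\geq-\tfrac{1}{2}M\delta\xi'(1)-\tfrac{1}{2}M\delta\xi'(1)+o(1)=-M\delta\xi'(1)+o(1)$ uniformly in $t\in[0,1]$, and integrating over $[0,1]$ and taking $\liminf_N$ yields the claim. The main technical obstacle is the uniform off-diagonal bound: the leading $O(1/N)$ correction to the scaling factors $a_j(\veps)$ gets amplified by the factor $(M+N)$ in front of $\xi$ in $C_1(x^1,x^2)$, producing an $O(1)$ contribution, and one must check that its modulus is bounded by $M\delta\xi'(1)$ uniformly over both replicas, which is precisely what pins down the constant in the statement.
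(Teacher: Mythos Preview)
Your proof is correct and follows essentially the same route as the paper: use Lemma~\ref{ASS:lem1} to bound $Z_{M+N}$ from below by an integral over $A_\delta$, then run a Gaussian interpolation between the (scaled) full Hamiltonian $H_{M+N}$ and the cavity Hamiltonian $H_{M,N}+\veps\cdot\vz$, and control the derivative via the expansion $a_{M+1}(\veps)^2=1+(M-\|\veps\|^2)/N+O(1/N^2)$. The only organizational difference is that the paper first decomposes $H_{M+N}=H_{M,N}+\sum_i\varepsilon_i Z_i+\gamma$ and interpolates each piece against its counterpart, obtaining three covariance differences $U_1,U_2,U_3$ with $U_2,U_3\to 0$ and $|U_1|\le M\delta\xi'(1)$, whereas you interpolate the whole process at once and bound the single difference $C_1-C_0$; after the cancellation of the $\veps^1\cdot\veps^2$ contributions (which you allude to via the ``analogous estimates for the $a_j$'' and the Taylor expansion, and which indeed leaves the leading term $R_\sigma\,\xi'(R_\sigma)\,(2M-\|\veps^1\|^2-\|\veps^2\|^2)/2$), both computations yield the same bound $M\delta\xi'(1)$.
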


\begin{proof}
Suppose that $(H_{M,N}'(\vsi):\vsi\in S_N)$ is an independent copy of $(H_{M,N}(\vsi):\vsi\in S_N)$ and is independent of all the other randomness.
We consider the following interpolating Hamiltonian
\begin{align*}
\varphi(t)=&\mathbb{E}\log\int_{A_\delta}F_{M,N}(\veps)d\veps\int_{S_N}\exp(H_{1,t}(\vrho)+H_{2,t}(\vrho)+H_{3,t}(\vrho))d
\lambda_N(\vsi),
\end{align*}
where for $\vrho=(\vsi,\veps)$ with $\vsi\in S_N$ and $\veps\in A_\delta,$
\begin{align*}
H_{1,t}(\vrho)&=\sqrt{t}H_{M,N}(\vsi a_{M+1})+\sqrt{1-t}H_{M,N}'(\vsi)\\
H_{2,t}(\vrho)&=\sum_{i\leq M}\varepsilon_i\left(\sqrt{t}a_i Z_i(\vsi a_{M+1})+\sqrt{1-t}z_i(\vsi)\right)\\
H_{3,t}(\vrho)&=\sqrt{t}\gamma(\vsi a_{M+1},\varepsilon_1a_1,\ldots,\varepsilon_Ma_M).
\end{align*}
From $(\ref{ASS:lem1:eq1})$, obviously $\varphi(1)\leq\mathbb{E}\log Z_{M+N}$ when $N$ is large enough. On the other hand, using Gaussian integration by parts and replicas $\vrho^1=(\vsi^1,\veps^1)$ and $\vrho^2=(\vsi^2,\veps^2),$ it follows that
\begin{align*}
\varphi'(t)=\frac{1}{2}\sum_{j\leq 3}\left(\mathbb{E}\left<U_{j}(\vrho,\vrho)\right>_t-\mathbb{E}\left<U_j(\vrho^1,\vrho^2)\right>_t\right),
\end{align*}
where $\left<\cdot\right>_t$ is the Gibbs average corresponding to random weights $F_{M,N}\exp(H_{1,t}+H_{2,t}+H_{3,t})$ with reference measure
$\lambda_N\times d\veps$ on $S_N\times A_\delta$ and
\begin{align*}
U_1(\vrho^1,\vrho^2)&=\mathbb{E}H_{M,N}(\vsi^1a_{M+1}(\veps^1))H_{M,N}(\vsi^2a_{M+1}(\veps^2))-
\mathbb{E}H_{M,N}'(\vsi^1)H_{M,N}'(\vsi^2)\\
U_2(\vrho^1,\vrho^2)&=\sum_{i\leq M}\varepsilon_i^1\varepsilon_i^2\left(a_i(\veps^1)a_{i}(\veps^2)\mathbb{E}Z_i(\vsi^1a_{M+1}(\veps^1))Z_i(\vsi^2a_{M+1}(\veps^2))
-\mathbb{E}z_i(\vsi^1)z_i(\vsi^2)\right)\\
U_3(\vrho^1,\vrho^2)&=\mathbb{E}\gamma(\vsi^1a_{M+1}(\veps^1),\eps_1^1a_1(\veps^1),\ldots,\eps_M^1a_M(\veps^1))\\
&\qquad\cdot\gamma(\vsi^2a_{M+1}(\veps^2),\eps_1^2a_1(\veps^2),\ldots,\eps_M^2a_M(\veps^2)).
\end{align*}
From $(\ref{ASS:eq0.1})$, $(\ref{ASS:eq2.2}),$ and $(\ref{ASS:eq2.3})$, it is easy to see that for $j=2,3,$
$$\lim_{N\rightarrow\infty}\sup_{\vrho^1,\vrho^2\in S_N\times A_\delta}|U_j(\vrho^1,\vrho^2)|\rightarrow 0$$
and from $(\ref{ASS:eq2.1})$ and using mean value theorem,
\begin{align*}
&\lim_{N\rightarrow\infty}\sup_{\vrho^1,\vrho^2\in S_N\times A_\delta}|U_1(\vrho^1,\vrho^2)|\\
&\leq \lim_{N\rightarrow\infty}\xi'(1)(M+N)\sup_{\veps^1,\veps^2\in A_\delta}\left|\frac{N}{M+N}a_{M+1}(\veps^1)a_{M+1}(\veps^2)-1\right|\\
&=\xi'(1)\sup_{\veps^1,\veps^2\in A_\delta}\left|M-\frac{\|\veps^1\|^2+\|\veps^2\|^2}{2}\right|\leq M\delta\xi'(1).
\end{align*}
Thus we conclude that $\limsup_{N\rightarrow\infty}\sup_{0<t<1}|\varphi'(t)|\leq M\delta\xi'(1)$ and so $(\ref{ASS:lem2:eq1})$ follows by
\begin{align*}
\liminf_{N\rightarrow\infty}\mathbb{E}\log \frac{Z_{M+N}}{\int_{A_\delta}F_{M,N}J_{M,N}d\veps}&\geq \liminf_{N\rightarrow\infty}(\varphi(1)-\varphi(0))\\
&\geq -\limsup_{N\rightarrow\infty}\sup_{0<t<1}|\varphi'(t)|\geq -M\delta\xi'(1).
\end{align*}
\end{proof}

\noindent Finally, let us deal with the second term of $(\ref{sec2:eq1})$ below.

\begin{lemma}\label{ASS:lem3} We have
\begin{align}\label{ASS:lem3:eq1}
\liminf_{N\rightarrow\infty}\mathbb{E}\log\frac{\int_{A_\delta}F_{M,N}J_{M,N}d\veps}{\int_{A_\delta}F_{M}J_{M,N}d\veps}\geq 0.
\end{align}

\end{lemma}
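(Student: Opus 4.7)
The plan is to show that on the compact set $A_\delta$, the density $F_{M,N}$ is uniformly close to the Gaussian density $F_M$ as $N\to\infty$, so that the ratio of integrals in $(\ref{ASS:lem3:eq1})$ is deterministically close to $1$, and taking $\log$ and $\mathbb{E}$ yields a quantity tending to $0$ (strictly stronger than the claimed bound $\geq 0$).

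Concretely, I would first establish
$$\eta_N:=\sup_{\veps\in A_\delta}\left|\frac{F_{M,N}(\veps)}{F_M(\veps)}-1\right|\longrightarrow 0.$$
This is the quantitative form of the Poincar\'e lemma mentioned after Lemma \ref{ASS:lem1}. Using $|S_K^1|=2\pi^{K/2}/\Gamma(K/2)$ together with Stirling's formula, one checks $(2\pi)^{M/2}b_{M,N}\to 1$. Moreover, on $A_\delta$ each coordinate satisfies $|\varepsilon_j|\leq\sqrt{M(1+\delta)}$, so the elementary asymptotic $(1-\varepsilon_j^2/(M+N+1-j))^{(M+N-j-2)/2}\to\exp(-\varepsilon_j^2/2)$ is uniform in $\veps\in A_\delta$. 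Multiplying the $M$ factors (and noting that $A_\delta\subset A_{M,N}$ for $N$ large enough, so that $F_{M,N}$ is in fact defined on $A_\delta$) yields $\eta_N\to 0$.

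With $\eta_N$ in hand, since $J_{M,N}\geq 0$ and $F_M>0$, one has pointwise in the Gaussian randomness
$$1-\eta_N\leq\frac{\int_{A_\delta}F_{M,N}(\veps)J_{M,N}(\veps)d\veps}{\int_{A_\delta}F_M(\veps)J_{M,N}(\veps)d\veps}\leq 1+\eta_N.$$
Because $\eta_N$ is deterministic, taking $\log$ and $\mathbb{E}$ gives
$$\log(1-\eta_N)\leq\mathbb{E}\log\frac{\int_{A_\delta}F_{M,N}J_{M,N}d\veps}{\int_{A_\delta}F_MJ_{M,N}d\veps}\leq\log(1+\eta_N),$$
whose two sides vanish as $N\to\infty$, proving $(\ref{ASS:lem3:eq1})$ with equality in the limit.

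The only substantive step is the uniform convergence $\eta_N\to 0$, which I do not anticipate as a real obstacle: since $A_\delta$ is compact and each coordinate $\varepsilon_j$ stays in a fixed bounded interval on it, the argument reduces to the elementary asymptotics of $(1-x/n)^n$ together with Stirling's formula for the normalizing constants $b_{M,N}$, and nothing beyond routine bookkeeping of these two standard limits is required.
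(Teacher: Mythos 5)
Your proof is correct and takes a genuinely different route from the paper's. The paper does not try to show two-sided uniform convergence of $F_{M,N}/F_M$ on $A_\delta$; instead it writes the ratio of integrals as $\int F_M J_{M,N}\exp(\log F_{M,N}/F_M)\,d\veps\big/\int F_M J_{M,N}\,d\veps$, applies Jensen's inequality to pull the $\log$ inside, and then uses a one-sided pointwise lower bound $\log(F_{M,N}(\veps)/F_M(\veps))\geq \log b_{M,N}-\tfrac{M}{2}\log\tfrac{1}{2\pi}+o(1)$ (coming from $\log(1-x)\geq -2x$) together with $b_{M,N}\to(2\pi)^{-M/2}$. That gives exactly the $\liminf\geq 0$ needed and nothing more; in particular it never needs the two-sided control of the density ratio. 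Your approach is arguably cleaner and proves a stronger statement: since $A_\delta$ is compact and eventually contained in $A_{M,N}$, the uniform limits $(2\pi)^{M/2}b_{M,N}\to 1$ and $(1-\varepsilon_j^2/(M+N+1-j))^{(M+N-j-2)/2}\to e^{-\varepsilon_j^2/2}$ do give $\eta_N\to 0$, the deterministic sandwich $1-\eta_N\leq \int F_{M,N}J_{M,N}/\int F_M J_{M,N}\leq 1+\eta_N$ is valid because $J_{M,N}\geq 0$, and hence the expectation of the $\log$ tends to $0$, not merely stays nonnegative in the $\liminf$. The trade-off is that you need to verify the full uniform two-sided estimate (including the normalizing constant via Stirling) where the paper only needs a crude one-sided bound; both amounts of work are modest, and the conclusion is the same for the purposes of Theorem \ref{ASS:prop1}.
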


\begin{proof}
Using the inequality $\log(1-x)\geq -2x$ for $0<x<1/2,$ when $N$ is sufficiently large, we have for every $\veps\in A_\delta,$
\begin{align*}
&\log \frac{F_{M,N}(\veps)}{F_M(\veps)}\\
&=\log b_{M,N}-\frac{M}{2}\log\frac{1}{2\pi}+\sum_{j=1}^M\frac{M+N-j-2}{2}\log\left(1-\frac{\varepsilon_j^2}{M+N+1-j}\right)+\frac{\|\veps\|^2}{2}\\
&\geq \log b_{M,N}-\frac{M}{2}\log\frac{1}{2\pi}+\frac{3\|\veps\|^2}{M+N}.
\end{align*}
Note that $\lim_{N\rightarrow\infty}b_{M,N}=1/(2\pi)^{M/2}$ as can be seen from $(\ref{ASS:lem1:eq1})$ with $f=1.$ By Jensen's inequality, we obtain
\begin{align*}
&\liminf_{N\rightarrow\infty}\mathbb{E}\log\frac{\int_{A_\delta}F_MJ_{M,N}\exp \left(\log F_{M,N}/F_{M}\right)d\veps}{\int_{A_\delta}F_MJ_{M,N}d\veps}\\
&\geq \liminf_{N\rightarrow\infty}\mathbb{E}\frac{\int_{A_\delta}F_MJ_{M,N}\log F_{M,N}/F_{M}d\veps}{\int_{A_\delta}F_MJ_{M,N}d\veps}\\
&\geq \liminf_{N\rightarrow\infty}\left(\log b_{M,N}-\frac{M}{2}\log \frac{1}{2\pi}+\frac{3M}{M+N}\right)=0
\end{align*}
and this gives $(\ref{ASS:lem3:eq1}).$
\end{proof}

\smallskip

\noindent{\bf Proof of Theorem \ref{ASS:prop1}.}
From $(\ref{ASS:main:eq0})$, $(\ref{ASS:main:eq1})$, and $(\ref{sec2:eq1}),$ using Lemmas  $\ref{ASS:lem4},$ $\ref{ASS:lem2}$, and $\ref{ASS:lem3},$ the lower limit of 
$N^{-1}{\mathbb{E}\log Z_{N}}$ is bounded from below by
\begin{align*}
\frac{1}{M}\liminf_{N\rightarrow\infty}\left(\mathbb{E}\log\int_{A_\delta}\left<\exp \veps\cdot\vz(\vsi)\right>_{M,N}d\nu_M(\veps)-\mathbb{E}\log \left<\exp\sqrt{M}y(\vsi)\right>_{M,N}\right)-\delta\xi'(1)
\end{align*}
and then  $(\ref{ASS:lem5.1:eq1})$ yields $(\ref{ASS:prop1:eq1}).$\qed


\section{Proof of the Parisi formula for spherical models.}\label{proof}

\noindent This section is devoted to proving Theorem $\ref{thm1}.$ Our argument is started with Guerra's replica symmetry breaking interpolation scheme in the spherical models (see Section 3 in \cite{Tal061}). In the case when $p$-spin interactions for odd $p\geq 3$ are not present in $(\ref{eq2})$, this scheme implies that the upper limit of the free energy is bounded from above by $\inf_{\vx}\mathcal{P}(\vx).$ The fact that this bound still holds in the general mixed $p$-spin case relies on Talagrand's positivity principle on the overlap (see \cite{Tal03} and Section 12.3 in \cite{Tal11}) that is deduced from the validity of the G.G. identities \cite{GG98} derived by a perturbation trick on the Hamiltonian. This part of the argument is well-known and has been used in several places. For example, the readers are referred to Theorem 14.4.4 \cite{Tal11} for a great detailed discussion.

Our main concern in the proof will be concentrated on obtaining the lower bound, that is, the lower limit of the free energy is bounded from below by
$\inf_{\vx}\mathcal{P}(\vx)$, that will be established in several steps and is based on the A.S.S. scheme. Again due to the non-product structure of the uniform measure on the sphere, additional technical issues occur. For instance, in the Ising spin glass model, the argument for the lower bound only uses the A.S.S. scheme with a single cavity coordinate, that is $M=1$ (see Theorem 3.5 \cite{AC10}, Lemma 11 \cite{Pan10:2}, or Proposition 2 \cite{Pan11:1}), but this is not the case in our model. Indeed, recalling the formulation of the Parisi formula in the spherical model, one may figure out immediately that it is necessary to deal with the A.S.S. scheme $(\ref{ASS:prop1:eq1})$ with all $M\geq 1$ and compute their lower limit in $M.$ Under this circumstance, we find the fact that the Parisi functionals $(\mathcal{P}_M)_{M\geq 1}$ have the same Lipschitz constant with respect to the $L_1$-norm as stated in Lemma $\ref{AGM:prop21}$ below is of great use. 
\smallskip

\noindent{\bf The A.S.S. scheme for the perturbed Hamiltonian.} For each $u=(u_p)_{p\geq 1}$ with $1\leq u_p\leq 2,$ let us consider a perturbation Hamiltonian 
\begin{align*}
H_{N}^{pert}(\vsi)=\frac{1}{N^{1/8}}\sum_{p\geq 1}\frac{u_p}{2^p}H_{N,p}'(\vsi),
\end{align*}
where $H_{N,p}'(\vsi)$ are independent copies of the $p$-spin Hamiltonian in $(\ref{eq1})$. Since the perturbation term is of a small order, replacing $H_N$ with $H_N+H_N^{pert}$ in $(\ref{thm1:eq1})$ obviously does not affect the limit and in addition, the following A.S.S. scheme also holds. 
Consider another perturbation Hamiltonian
\begin{align*}
H_{M,N}^{pert}(\vsi)=\frac{1}{(M+N)^{1/8}}\sum_{p\geq 1}\frac{u_p}{2^p}\left(\frac{N}{M+N}\right)^{(p-1)/2}H_{N,p}'(\vsi).
\end{align*}
Let $G_{M,N}^-$ be the Gibbs measure corresponding to the Hamiltonian $H_{M,N}+H_{M,N}^{pert}$ with reference measure $\lambda_N$ and $\left<\cdot\right>_{M,N}^-$ be the average with respect to the product Gibbs measure $G_{M,N}^{-\otimes\infty}.$

\begin{proposition}[The A.S.S. scheme]\label{ASS:cor} Recall $A_\delta$ and $\nu_M$ from Theorem $\ref{ASS:prop1}.$
For every $\delta>0$ and $M\geq 1,$ we have
\begin{align}
\begin{split}\label{ASS:cor:eq1}
&\liminf_{N\rightarrow\infty}\frac{1}{N}\mathbb{E}\log Z_N\\
&\geq\frac{1}{M}\liminf_{N\rightarrow\infty}\sup_{u}\left(\mathbb{E}\log \int_{S_M}\left<\exp \veps\cdot \vz (\vsi)\right>_{M,N}^-d\lambda_M(\veps)-
\mathbb{E}\log \left<\exp \sqrt{M}y(\vsi)\right>_{M,N}^-\right)\\
&\qquad\qquad -\delta\xi'(1)+\frac{1}{M}\log \nu_M(A_\delta),
\end{split}
\end{align}
where the supremum is taken over all $u=(u_{p})_{p\geq 1}$ with $1\leq u_p\leq 2.$
\end{proposition}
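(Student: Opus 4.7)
The plan is to apply Theorem~\ref{ASS:prop1} to the perturbed Hamiltonian $H_N+H_N^{pert,u}$ for each fixed $u=(u_p)_{p\ge 1}$ with $1\le u_p\le 2$, to verify that the perturbation does not affect the free-energy limit uniformly in $u$, and then to move $\sup_u$ inside the $\liminf_N$ using the uniformity of all error terms in $u\in[1,2]^{\mathbb{N}}$.

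First, the key observation is that $H_N+H_N^{pert,u}$ is, in distribution, a mixed $p$-spin Hamiltonian of the form $(\ref{eq2})$ with modified coefficients $\tilde\beta_p(u,N)^2=\beta_p^2+N^{-1/4}u_p^2/4^p$, which still satisfy $\sum_p 2^p\tilde\beta_p^2<\infty$ uniformly in $u$ and $N$. Moreover, $H_N^{pert,u}$ is a centered Gaussian process on $S_N$ with variance $N^{3/4}\sum_p u_p^2/4^p$ (independent of $\vsi$) and is independent of $H_N$, so Jensen's inequality in the auxiliary disorder gives
\[0\le\mathbb{E}\log Z_N^{pert,u}-\mathbb{E}\log Z_N\le\tfrac{1}{2}N^{3/4}\sum_{p\ge 1}u_p^2/4^p,\]
and hence $N^{-1}(\mathbb{E}\log Z_N^{pert,u}-\mathbb{E}\log Z_N)\to 0$ uniformly in $u$; in particular $\liminf_N N^{-1}\mathbb{E}\log Z_N=\liminf_N N^{-1}\mathbb{E}\log Z_N^{pert,u}$ for every $u$.

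Next, I apply Theorem~\ref{ASS:prop1} to the perturbed model for each fixed $u$. The resulting conclusion involves Gaussian fields $\tilde\vz,\tilde y$ with covariances built from $\tilde\xi$. Since $|\tilde\xi^{(j)}(x)-\xi^{(j)}(x)|=O(N^{-1/4})$ uniformly in $x\in[0,1]$ and $u$ for $j=0,1,2$, a Gaussian interpolation strictly parallel to Lemma~\ref{ASS:lem4} replaces $(\tilde\vz,\tilde y)$ by $(\vz,y)$ with an error vanishing uniformly in $u$, and $\delta\tilde\xi'(1)=\delta\xi'(1)+o(1)$. Combined with the first paragraph this yields, for each fixed $u\in[1,2]^{\mathbb{N}}$,
\[\liminf_N\frac{\mathbb{E}\log Z_N}{N}\ge\frac{1}{M}\liminf_N f_N^-(u)-\delta\xi'(1)+\frac{1}{M}\log\nu_M(A_\delta),\]
where $f_N^-(u)$ abbreviates the bracketed expression in $(\ref{ASS:cor:eq1})$.

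The main obstacle is upgrading this family of pointwise inequalities to one with $\sup_u$ inside the $\liminf_N$. For this, one revisits the proof of Theorem~\ref{ASS:prop1}, which in fact produces, at each finite $N$, the non-asymptotic cavity inequality
\[\mathbb{E}\log\frac{Z_{M+N}^{pert,u}}{Z_N^{pert,u}}\ge Mf_N^-(u)-M\delta\xi'(1)+\log\nu_M(A_\delta)-\eta(M,N,u)\]
with $\eta(M,N,u)\to 0$ as $N\to\infty$ uniformly in $u\in[1,2]^{\mathbb{N}}$, since each error term in Lemmas~\ref{ASS:lem5}--\ref{ASS:lem3} is controlled by quantities $\tilde\xi,\tilde\xi',\tilde\xi''$ within $O(N^{-1/4})$ of $\xi,\xi',\xi''$. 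Selecting, at each $N$, a near-maximizer $u^N$ of $f_N^-(\cdot)$, applying the above inequality at $u=u^N$, and invoking the subadditive estimate $(\ref{ASS:main:eq0})$ together with the uniform free-energy estimate from the first paragraph, one obtains $(\ref{ASS:cor:eq1})$ with $\sup_u$ inside $\liminf_N$.
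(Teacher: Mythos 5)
Your proposal tracks the paper's (unwritten) proof in broad outline: the paper merely says ``one may easily find that the same arguments in the proof of Theorem~\ref{ASS:prop1} also work'' and leaves the details to the reader, and you correctly identify the essential observations, namely that the perturbation has variance $O(N^{3/4})$ and hence contributes $o(N)$ to $\mathbb{E}\log Z_N$ uniformly in $u$, that $H_{M,N}^{pert}$ is precisely the $N$-coordinate piece of $H_{M+N}^{pert}$ so that the cavity decomposition of $H_{M+N}+H_{M+N}^{pert}$ has the same form as before with negligible perturbation cavity terms, and that the error terms in Lemmas~\ref{ASS:lem5}--\ref{ASS:lem3} admit bounds that are uniform in $u\in[1,2]^{\mathbb{N}}$.

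However, the final step of your third paragraph glosses over the genuinely delicate point, and as written it does not close the gap. The subadditive estimate $(\ref{ASS:main:eq0})$ is a telescoping argument along a \emph{single} sequence $(c_N)$: from $c_{M+N}-c_N\ge s-\epsilon$ for $N\ge N_0$ one reconstructs $c_N$ and obtains $\liminf_N c_N/N\ge s/M$. When you select a near-maximizer $u^N$ at each $N$ and apply the finite-$N$ cavity inequality to $\tilde c_N:=\mathbb{E}\log Z_N^{pert,u^N}$, the inequality controls $\tilde c_{M+N}^{u^N}-\tilde c_N^{u^N}$ (same $u^N$ in both terms), while the telescoping for the sequence $(\tilde c_N^{u^N})_N$ would require control of $\tilde c_{M+N}^{u^{M+N}}-\tilde c_N^{u^N}$. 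These differ by $\tilde c_{M+N}^{u^{M+N}}-\tilde c_{M+N}^{u^N}$, and since each of $\mathbb{E}\log Z_{M+N}^{pert,u}-\mathbb{E}\log Z_{M+N}$ ranges over an interval of length of order $(M+N)^{3/4}$ as $u$ varies, this discrepancy is \emph{not} uniformly small. The ``uniform free-energy estimate from the first paragraph'' bounds $|\mathbb{E}\log Z_N^{pert,u}-\mathbb{E}\log Z_N|$ by $o(N)$, but $o(N)$ is not small compared with the $O(1)$ quantity $c_{M+N}-c_N$, so one cannot freely substitute perturbed for unperturbed partition functions inside the finite-size increment. To genuinely obtain $\sup_u$ inside $\liminf_N$ you would need to show, for example, that $\mathbb{E}\log\bigl(Z_{M+N}^{pert,u}/Z_N^{pert,u}\bigr)-\mathbb{E}\log\bigl(Z_{M+N}/Z_N\bigr)\to 0$ uniformly in $u$, which does not follow from the crude one-sided Jensen bounds you invoke; this is precisely the point that deserves an explicit argument and is not supplied by either the paper's one-line remark or your outline.
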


\smallskip

\noindent Starting with the Hamiltonian $H_{M+N}+H_{M+N}^{pert}$ and using the fact that $H_{M+N}^{pert}$ is of a small order, one may easily find that the same arguments in the proof of Theorem $\ref{ASS:prop1}$ also work in Proposition $\ref{ASS:cor}$, so we will leave this part of the proof to the readers. In what follows, for each $M\geq 1$, we will construct a sequence $(u^{M,N})_{N\geq 1}$ for $u^{M,N}=(u_p^{M,N})_{p\geq 1}$ with $1\leq u_p^{M,N}\leq 2$ along which the G.G. identities for the Gibbs measure $G_{M,N}^-$ and the A.S.S. scheme for the Hamiltonian $H_{M,N}+H_{M,N}^{pert}$ hold simultaneously. Let $(\vsi^\ell)_{\ell\geq 1}$ be an i.i.d. sample from $G_{M,N}^-$ and let $R^{M,N}=(R_{\ell,\ell'}^{M,N})_{\ell,\ell'\geq 1}$ be the normalized Gram matrix, or matrix of overlaps, of this sample. Let $\mathcal{C}$ be the collection of all $(p,n,f)$ with $p\geq 1,$ $n\geq 2,$ and $f$ being a monomial of $(R_{\ell,\ell'}^{M,N})_{\ell,\ell'\leq n}.$ We define
\begin{align*}
&\Phi_{M,N}(p,n,f)\\
&=\left|\mathbb{E}\left<f(R_{1,n+1}^{M,N})^p\right>_{M,N}^--\frac{1}{n}\mathbb{E}\left<f\right>_{M,N}^-
\mathbb{E}\left<(R_{1,2}^{M,N})^p\right>_{M,N}^--\frac{1}{n}\sum_{\ell=2}^n
\mathbb{E}\left<f(R_{1,\ell}^{M,N})^p\right>_{M,N}^-\right|
\end{align*}
for $(p,n,f)\in \mathcal{C}.$ Then the G.G. identities can be stated as follows (see Theorem 12.3.1 \cite{Tal11}).

\begin{proposition}\label{prop:GG}
For $M\geq 1$, we have 
\begin{align}\label{prop:GG:eq1}
\lim_{N\rightarrow\infty}\mathbb{E}_u\Phi_{M,N}(p,n,f)=0
\end{align}
for all $(p,n,f)\in\mathcal{C},$ where we view $u=(u_p)_{p\geq 1}$ as a sequence of i.i.d. uniform random variables on $\left[1,2\right]$ and $\mathbb{E}_u$ is the expectation with respect to the randomness of $u.$
\end{proposition}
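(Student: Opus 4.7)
The plan is to follow the standard derivation of the Ghirlanda--Guerra identities (Theorem 12.3.1 of \cite{Tal11}) adapted to the present perturbed spherical setting. For each $p\geq 1$ set
$$X_p(\vsi)=\frac{1}{(M+N)^{1/8}\,2^p}\left(\frac{N}{M+N}\right)^{(p-1)/2}H_{N,p}'(\vsi),$$
so that $H_{M,N}^{pert}=\sum_{p\geq 1}u_p X_p$ and $\mathbb{E}X_p(\vsi^1)X_p(\vsi^2)=D_{p,M,N}R(\vsi^1,\vsi^2)^p$ with $D_{p,M,N}\leq c_p(M+N)^{-1/4}$ and $\sum_{p\geq 1} c_p<\infty$. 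Let $F_{M,N}(u)=\mathbb{E}\log\int_{S_N}\exp(H_{M,N}+\sum_p u_p X_p)d\lambda_N$; direct differentiation gives $\partial_{u_p}F_{M,N}=\mathbb{E}\left<X_p\right>_{M,N}^{-}$ and $\partial^2_{u_p}F_{M,N}=\mathbb{E}\left<(X_p-\left<X_p\right>_{M,N}^-)^2\right>_{M,N}^-\geq 0$.

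The first step is to establish two vanishing estimates after averaging over $u$: (i) an \emph{in-Gibbs} variance bound $\mathbb{E}_u\mathbb{E}\left<(X_p-\left<X_p\right>_{M,N}^-)^2\right>_{M,N}^-\to 0$, which follows by integrating $\partial_{u_p}^2 F_{M,N}$ against $u_p\in[1,2]$ and bounding the result by the oscillation $\sup_u|\mathbb{E}\left<X_p\right>_{M,N}^-|\leq O(\sqrt{D_{p,M,N}})=O((M+N)^{-1/8})$; and (ii) a \emph{disorder} concentration $\mathbb{E}_u\mathbb{E}|\left<X_p\right>_{M,N}^--\mathbb{E}\left<X_p\right>_{M,N}^-|\to 0$, which follows from Gaussian concentration of $\log Z_{M,N}^-(u)$ in the couplings $g,g'$ (giving $O(1)$ variance thanks to the Lipschitz bound $\|\nabla\log Z\|^2\leq \left<\sum(\partial H)^2\right>=O(1)$ on the sphere) combined with the Talagrand--Panchenko trick of integrating the squared deviation against $u_p$.

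The second step is Gaussian integration by parts in the disorder $g'$ of $H_{N,p}'$. For $(p,n,f)\in\mathcal{C}$, using $R_{1,1}^{M,N}=1$ on the sphere,
$$\mathbb{E}\left<f\cdot X_p(\vsi^1)\right>_{M,N}^{-}=u_p D_{p,M,N}\Bigl(\mathbb{E}\left<f\right>_{M,N}^{-}+\sum_{\ell=2}^n\mathbb{E}\left<f(R_{1,\ell}^{M,N})^p\right>_{M,N}^{-}-n\,\mathbb{E}\left<f(R_{1,n+1}^{M,N})^p\right>_{M,N}^{-}\Bigr),$$
and specializing $f\equiv 1$, $n=1$ gives $\mathbb{E}\left<X_p(\vsi^1)\right>_{M,N}^{-}=u_p D_{p,M,N}(1-\mathbb{E}\left<R_{1,2}^p\right>_{M,N}^{-})$. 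Combining (i) and (ii) through Cauchy--Schwarz (the former controls the Gibbs covariance of $X_p(\vsi^1)$ with $f(\vsi^1,\ldots,\vsi^n)$, the latter controls the disorder covariance of $\left<X_p\right>_{M,N}^-$ with $\left<f\right>_{M,N}^-$) gives $\mathbb{E}_u\bigl|\mathbb{E}\left<f\cdot X_p(\vsi^1)\right>_{M,N}^--\mathbb{E}\left<f\right>_{M,N}^-\mathbb{E}\left<X_p\right>_{M,N}^-\bigr|=o(D_{p,M,N})$; substituting the two IBP formulas into this asymptotic equality and dividing by $nu_pD_{p,M,N}$ (legitimate since $u_p\geq 1$) yields precisely $\mathbb{E}_u\Phi_{M,N}(p,n,f)\to 0$. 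The main obstacle is to verify that the concentration rates in (i) and (ii) are actually $o(D_{p,M,N})$, which requires the specific scalings $(M+N)^{-1/8}$ and $2^{-p}$ chosen in $H_{M,N}^{pert}$; beyond this, the proof is identical to the Ising case \cite{Pan10:2,Pan11:1}, since $R_{1,1}=1$ on the sphere preserves the algebraic structure of the IBP.
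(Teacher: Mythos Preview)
The paper does not supply its own proof of this proposition; it simply cites Theorem~12.3.1 of \cite{Tal11}. Your outline is exactly that standard argument, so the overall strategy is the right one and matches what the paper intends.

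That said, two of your scaling claims are wrong, and if taken at face value they would make step~(ii) fail rather than succeed. First, from $\mathbb{E}H_{N,p}'(\vsi^1)H_{N,p}'(\vsi^2)=N\,R(\vsi^1,\vsi^2)^p$ one gets
\[
D_{p,M,N}=\frac{N}{(M+N)^{1/4}\,4^{p}}\Bigl(\frac{N}{M+N}\Bigr)^{p-1}\sim \frac{N^{3/4}}{4^{p}},
\]
which \emph{grows} like $N^{3/4}$; it is not $O((M+N)^{-1/4})$. Second, the Lipschitz bound $\|\nabla_{g,g'}\log Z_{M,N}^{-}\|^{2}\leq \bigl<\sum(\partial H)^{2}\bigr>$ equals $N\xi(1)+O(N^{3/4})$, so Gaussian concentration gives $\mathrm{Var}(\log Z_{M,N}^{-})=O(N)$, not $O(1)$. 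With your stated values ($D_{p,M,N}\to 0$ and $\mathrm{Var}=O(1)$) the required comparison ``concentration rate $=o(D_{p,M,N})$'' would be false, since $N^{-1/8}$ is not $o(N^{-1/4})$.

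With the correct scalings the argument does go through: integrating $\partial_{u_p}^{2}F_{M,N}$ over $[1,2]$ gives $\mathbb{E}_u\mathbb{E}\bigl<(X_p-\langle X_p\rangle)^2\bigr>=O(D_{p,M,N})$, hence by Cauchy--Schwarz the in-Gibbs covariance is $O(D_{p,M,N}^{1/2})=O(N^{3/8})$; the convexity/concentration trick with $\mathrm{Var}(\log Z)=O(N)$ yields $\mathbb{E}_u\mathbb{E}|\langle X_p\rangle-\mathbb{E}\langle X_p\rangle|=O(N^{1/2})$ up to lower-order terms. Both are $o(N^{3/4})=o(D_{p,M,N})$, and dividing by $n u_p D_{p,M,N}$ (with $u_p\geq 1$) gives $\mathbb{E}_u\Phi_{M,N}(p,n,f)\to 0$ as you claim. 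So the fix is purely computational: replace the two erroneous orders of magnitude and the rest of your sketch is the standard proof the paper is pointing to.
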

\smallskip

\noindent Since $\mathcal{C}$ is a countable set, we can enumerate it as $\left\{(p_j,n_j,f_j)\right\}_{j\geq 1}.$
Let us define 
\begin{align*}
\Phi_{M,N}(u)&=\sum_{j\geq 1}\frac{1}{2^j}\Phi_{M,N}(p_j,n_j,f_j),\\
\Lambda_{M,N}(u)&=\frac{1}{M}\left(\mathbb{E}\log\int_{S_M}\left<\exp \veps\cdot\vz(\vsi)\right>_{M,N}^-d\lambda_M(\veps)
-\mathbb{E}\log \left<\exp \sqrt{M}y(\vsi)\right>_{M,N}^-\right).
\end{align*} 
By Chebyshev's inequality, $\mathbb{P}_u(\Phi_{M,N}\leq \eta)\geq 1-\mathbb{E}_u\Phi_{M,N}/\eta$ for every $\eta>0.$ If we pick $\eta=2\mathbb{E}_u\Phi_{M,N},$ it implies that for any $M$ and $N$ we can always find $u^{M,N}=(u_p^{M,N})_{p\geq 1}$ such that 
$\Phi_{M,N}(u^{M,N})\leq 2\mathbb{E}_u\Phi_{M,N}$. Note that using $\Phi_{M,N}(n,p,f)\leq 2$ and the G.G. identities $(\ref{prop:GG:eq1})$ implies $\lim_{N\rightarrow\infty}\mathbb{E}_u\Phi_{M,N}=0.$ Consequently, we have
\begin{align}
\begin{split}\label{AGM:eq2}
\lim_{N\rightarrow\infty}\Phi_{M,N}(u^{M,N})&=0
\end{split}
\end{align}
and from $(\ref{ASS:cor:eq1})$, $\liminf_{N\rightarrow\infty}N^{-1}\mathbb{E}\log Z_N$ is bounded from below by
\begin{align}
\begin{split}\label{AGM:eq2.1}
\liminf_{N\rightarrow\infty}\Lambda_{M,N}(u^{M,N})-\delta\xi'(1)+\frac{1}{M}\log \nu_M(A_\delta).
\end{split}
\end{align} 
In other words, for each $M\geq 1,$ both the G.G. identities and the A.S.S. scheme hold along $(u^{M,N})_{N\geq 1}$.
Let us redefine the Hamiltonian $H_{M,N}^{pert}$ and the Gibbs measure $G_{M,N}^-$ by fixing parameters $u=u^{M,N}.$

\smallskip

\noindent {\bf Asymptotic Gibbs' measures.} Next, we will find a sequence of asymptotic Gibbs' measures that satisfy the G.G. identities and represent the lower limit in $(\ref{AGM:eq2.1})$. For each $M\geq 1,$ let us pick a subsequence $(N_n)$ along which the limits in $(\ref{AGM:eq2})$ and $(\ref{AGM:eq2.1})$ are achieved and the distribution $R^{M,N}$ under $\mathbb{E}G_{M,N}^{-\otimes\infty}$ converges in the sense of convergence of finite dimensional distributions to the distribution of some array $R^M$. To lighten the notation, we assume without loss of generality that the sequence $(N_n)$ coincides with the natural numbers. Under $\mathbb{E}G_{M,N}^{-\otimes\infty}$, the array ${R}^{M,N}$ is weakly exchangeable, that is, 
\begin{align*}
\left(R_{\pi(\ell),\pi(\ell')}^{M,N}\right)\stackrel{d}{=}\left(R_{\ell,\ell'}^{M,N}\right)
\end{align*}
for any permutation $\pi$ of finitely many indices. This property will still be preserved in the limit such that $R^M$ is a weakly exchangable symmetric non-negative definite array. Such an array is called a Gram-de Finetti array and is known to have the Dovbysh-Sudakov representation \cite{DS82}.

\begin{proposition}[The Dovbysh-Sudakov representation]\label{AGM:Prop1}
If $(R_{\ell,\ell'})_{\ell,\ell'\geq 1}$ is a Gram-de Finetti array such that $R_{\ell,\ell}=1$, then there exists a random measure $G$ on the unit ball of a separable Hilbert space such that
\begin{align*}
\left(R_{\ell,\ell'}\right)_{\ell,\ell'\geq 1}\stackrel{d}{=}\left(\vtau^{\ell}\cdot\vtau^{\ell'}+\delta_{\ell,\ell'}(1-\|\vtau^{\ell}\|^2)\right)_{\ell,\ell'\geq 1},
\end{align*}
where $(\vtau^\ell)$ is an i.i.d. sample from $G.$
\end{proposition}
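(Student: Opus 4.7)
My approach would combine the Aldous--Hoover representation theorem for weakly exchangeable arrays with a Mercer-type spectral factorization of a symmetric positive semi-definite kernel. The heart of the argument is to use positive semi-definiteness together with the unit-diagonal constraint to eliminate the ``noise'' coordinate supplied by Aldous--Hoover, thereby reducing the array to a random kernel that can then be realized as the Gram matrix of vectors in a separable Hilbert space.

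First I would invoke the Aldous--Hoover theorem: since $(R_{\ell,\ell'})$ is a weakly exchangeable symmetric array, there exist mutually independent Uniform$[0,1]$ variables $\alpha$, $(\beta_\ell)_{\ell\geq 1}$, and $(\gamma_{i,j})_{i\leq j}$, together with a measurable function $f$ symmetric in its two middle arguments, such that $R_{\ell,\ell'}\stackrel{d}{=}f(\alpha,\beta_\ell,\beta_{\ell'},\gamma_{\ell\wedge\ell',\ell\vee\ell'})$. The constraint $R_{\ell,\ell}=1$ forces $f(\alpha,\beta,\beta,\gamma)\equiv 1$. The crucial step, which I expect to be the main obstacle, is to show that for $\ell\neq\ell'$ the entry $R_{\ell,\ell'}$ is $\sigma(\alpha,\beta_\ell,\beta_{\ell'})$-measurable, i.e.\ free of the pair-specific noise $\gamma_{\ell,\ell'}$. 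The idea is that almost-sure positive semi-definiteness of every finite principal submatrix, combined with unit diagonal and with the mutual independence of the $\gamma_{i,j}$ across pairs, is a strong rigidity condition: any nontrivial conditional variance of $R_{\ell,\ell'}$ given $(\alpha,\beta_\ell,\beta_{\ell'})$ would, via the $3\times 3$ and higher minors, place positive mass on non-PSD configurations. One thus extracts a symmetric measurable kernel $K\colon[0,1]^3\to[-1,1]$ with $R_{\ell,\ell'}=K(\alpha,\beta_\ell,\beta_{\ell'})$ almost surely for $\ell\neq\ell'$.

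Once this reduction is achieved, the remainder is classical Hilbert space theory. For each fixed value of $\alpha$, the kernel $K_\alpha(\beta,\beta'):=K(\alpha,\beta,\beta')$ is bounded, symmetric, and positive semi-definite on every finite subset of $[0,1]$, so the spectral theorem applied to the associated integral operator on $L^2([0,1])$ yields a separable Hilbert space $H$ and a measurable map $\tau_\alpha\colon[0,1]\to H$ with $K_\alpha(\beta,\beta')=\langle\tau_\alpha(\beta),\tau_\alpha(\beta')\rangle_H$ and $\|\tau_\alpha(\beta)\|_H\leq 1$, the norm bound coming from Cauchy--Schwarz applied to the $2\times 2$ minors. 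A measurable selection argument in $\alpha$ ensures that $G$, defined as the push-forward of Lebesgue measure on $[0,1]$ by $\tau_\alpha$ and thereby random through the randomness in $\alpha$, is a genuine random probability measure on the closed unit ball of $H$. Drawing i.i.d.\ $\vtau^\ell\sim G$ then amounts to taking $\vtau^\ell=\tau_\alpha(\beta_\ell)$ for i.i.d.\ uniform $\beta_\ell$ given $\alpha$; off the diagonal $\vtau^\ell\cdot\vtau^{\ell'}=K_\alpha(\beta_\ell,\beta_{\ell'})=R_{\ell,\ell'}$, and on the diagonal the additive correction $\delta_{\ell,\ell'}(1-\|\vtau^\ell\|^2)$ precisely restores the value $1=R_{\ell,\ell}$, yielding the claimed equality in distribution.
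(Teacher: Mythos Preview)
The paper does not prove this proposition at all: it is stated as a known result and attributed to Dovbysh and Sudakov \cite{DS82}, so there is no ``paper's own proof'' to compare your attempt against. Your outline follows the modern route to the Dovbysh--Sudakov theorem via Aldous--Hoover (essentially the strategy in Panchenko's expository note on this representation), and the overall architecture---Aldous--Hoover representation, elimination of the pair noise $\gamma_{\ell,\ell'}$, then spectral factorization of the resulting kernel---is correct.

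That said, the step you flag as the main obstacle is genuinely the main obstacle, and your justification for it is too heuristic to stand as a proof. The claim that ``any nontrivial conditional variance of $R_{\ell,\ell'}$ given $(\alpha,\beta_\ell,\beta_{\ell'})$ would, via the $3\times 3$ and higher minors, place positive mass on non-PSD configurations'' is not how the argument actually runs, and it is not clear it works: positive semi-definiteness is a nonlinear constraint, and conditional fluctuations in the off-diagonal entries do not obviously force a minor to go negative on a set of positive measure. The standard argument instead partitions $[0,1]$ into small intervals, groups indices by which interval their $\beta_\ell$ falls in, and uses the law of large numbers on the block averages of the array (which remain positive semi-definite) to show that the averaged entries converge to deterministic functions of $(\alpha,\beta,\beta')$; comparing this with the original array then forces the pair noise to be degenerate. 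If you want a self-contained proof you will need to implement this averaging/LLN step rather than appeal to minor inequalities.
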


\smallskip

\noindent Obviously the array $R^M$ satisfies $R_{\ell,\ell}^M=1.$ Using Proposition $\ref{AGM:Prop1}$, let us denote by $G_M$ the random measure generating the array $R^{M}$ and by $(\vtau^\ell)$ an i.i.d. sample from $G_M$. We also denote by $\left<\cdot\right>_M$ the average with respect to $G_M.$ From $(\ref{AGM:eq2})$ and an approximation argument, the measure $G_M$ satisfies the G.G. identities,
\begin{align}\label{AGM:eq4}
\mathbb{E}\left<f\psi(R_{1,n+1}^M)\right>_M=\frac{1}{n}\mathbb{E}\left<f\right>_M\mathbb{E}\left<\psi(R_{1,2}^M)\right>_M+\frac{1}{n}\sum_{\ell=2}^n
\mathbb{E}\left<f\psi(R_{1,\ell}^M)\right>_M
\end{align}
for all $n\geq 2,$ all bounded measurable function $f$ of $(R_{\ell,\ell'}^M)_{\ell,\ell'\leq n}$, and measurable function $\psi.$
These identities will be used to identify the asymptotic Gibbs measures $G_M$ in next subsection.

Let us continue to check that the limit of $\Lambda_{M,N}(u^{M,N})$ in $N$ can be expressed in terms of $G_{M},$ which relies on the following observation. Suppose for the moment that $R=(R_{\ell,\ell'})_{\ell,\ell'\geq 1}$ is an arbitrary Gram-de Finetti array such that $R_{\ell,\ell}=1.$ Let $\mathcal{L}$ be its distribution  and $G$ be any random measure generating $R$ as Proposition $\ref{AGM:Prop1}.$ Suppose that $\vz(\vtau)=(z_i(\vtau))_{i\leq M}$ and $y(\vtau)$ are two Gaussian processes on the unit ball of some Hilbert space generating $R$ with covariances
\begin{align*}
\mathbb{E}z_i(\vtau^1)z_{i'}(\vtau^2)=\delta_{i,i'}\xi'(\vtau^1\cdot\vtau^2),\,\,\mathbb{E}y(\vtau^1)y(\vtau^2)=\theta(\vtau^1\cdot\vtau^2),
\end{align*}
let $\veta$ be a $M$-dimensional standard Gaussian random vector, and let $\eta$ be a standard Gaussian random variable. Denote by $\left<\cdot\right>$ the Gibbs average corresponding to $G$ and define
\begin{align*}
\Psi_M(\mathcal{L})=&\frac{1}{M}\mathbb{E}\log \int_{S_M}\mathbb{E}_{\veta}\left<\exp\veps\cdot(\vz (\vtau)+\veta(\xi'(1)-\xi'(\|\vtau\|^2))^{1/2})\right>d\lambda_M(\veps)\\
&-\frac{1}{M}\mathbb{E}\log \mathbb{E}_\eta\left<\exp\sqrt{M}({y}(\vtau)+\eta(\xi'(1)-\xi'(\|\vtau\|^2))^{1/2})\right>.
\end{align*}

\begin{lemma}\label{AGM:lem2}
For each $M\geq 1,$ $\mathcal{L}\rightarrow \Psi_M(\mathcal{L})$ is a well-defined continuous function with respect to the weak convergence of the distribution $\mathcal{L}.$
\end{lemma}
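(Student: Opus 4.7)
My plan is to establish well-definedness via the Dovbysh--Sudakov uniqueness, to obtain uniform $L^2$ bounds on the logarithms, and then to conclude continuity by the method of moments.

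First, I integrate out the auxiliary Gaussians $\veta$ and $\eta$ to reduce the two quantities inside $\mathbb{E}\log$ in $\Psi_M$ to
\begin{align*}
A &= \Bigl\langle e^{M c(\vtau)^2/2}\int_{S_M}e^{\veps\cdot \vz(\vtau)}d\lambda_M(\veps)\Bigr\rangle, \\
B &= \bigl\langle e^{\sqrt{M}y(\vtau)+Mc(\vtau)^2/2}\bigr\rangle,
\end{align*}
where $c(\vtau)^2=\xi'(1)-\xi'(\|\vtau\|^2)$. All ingredients---the covariances of $\vz$ and $y$ and the scalar $c(\vtau)$---depend on $G$ only through inner products $\vtau\cdot\vtau'$ and norms $\|\vtau\|^2$ of i.i.d.\ samples, while Proposition \ref{AGM:Prop1} determines $G$ uniquely in distribution up to orthogonal isomorphism; since such isomorphisms preserve inner products and norms, $\Psi_M$ is a genuine functional of $\mathcal{L}$. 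For integrability, Jensen yields $\log A\ge 0$ (using $\int_{S_M}\veps\,d\lambda_M=0$ and $c(\vtau)^2\ge 0$) and an easy lower bound for $\log B$, while Borell's Gaussian concentration applied to the Lipschitz maps $\vz\mapsto\log A$ and $y\mapsto\log B$, whose Lipschitz constants are bounded in terms of $M$ and $\xi'(1)$, gives sub-Gaussian upper tails. This delivers uniform $L^2$-bounds on $|\log A|$ and $|\log B|$ independent of $\mathcal{L}$.

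Next, I prove continuity. Fix a sequence $\mathcal{L}_n\to\mathcal{L}$ weakly. Computing the $n$-th moment of $A$ by Gaussian integration by parts in $\vz$, together with the spherical constraint $\|\veps^\ell\|^2=M$, produces a cancellation of the diagonal $\|\vtau^\ell\|^2$-contributions and yields
\begin{align*}
\mathbb{E}A^n = e^{Mn\xi'(1)/2}\,\mathbb{E}\int_{S_M^n}\exp\Bigl(\tfrac{1}{2}\sum_{\ell\ne\ell'}\veps^\ell\cdot\veps^{\ell'}\xi'(R_{\ell,\ell'})\Bigr)\prod_\ell d\lambda_M(\veps^\ell),
\end{align*}
which is a bounded continuous functional of finitely many off-diagonal overlaps and hence of $\mathcal{L}$. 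The analogous calculation expresses $\mathbb{E}B^n$ as an expectation of a bounded continuous function of the off-diagonal overlaps $(R_{\ell,\ell'})_{\ell\ne\ell'}$ together with the norms $(\|\vtau^\ell\|^2)_\ell$. The method of moments, combined with the uniform $L^2$ control above, then upgrades moment convergence to $A_n\to A$ and $B_n\to B$ in distribution, and uniform integrability finally yields $\mathbb{E}\log A_n\to\mathbb{E}\log A$ and $\mathbb{E}\log B_n\to\mathbb{E}\log B$, proving $\Psi_M(\mathcal{L}_n)\to\Psi_M(\mathcal{L})$.

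The principal obstacle is the weak continuity of the map $\mathcal{L}\mapsto$ joint law of $(\|\vtau^\ell\|^2)_\ell$, which is needed in the analysis of $\mathbb{E}B^n$ but does not arise for $\mathbb{E}A^n$ thanks to the spherical cancellation. Since $\|\vtau^\ell\|^2$ is recovered from the array $R$ only via a limiting Gram--Schmidt / ergodic decomposition, this continuity is not automatic. I would resolve it by approximating $\xi'(\|\vtau^\ell\|^2)$ by a finite-block empirical proxy constructed from a large but finite top-left block of the overlap array---this proxy is manifestly weakly continuous in $\mathcal{L}$---and by controlling the approximation error in $\mathbb{E}\log B$ uniformly in $\mathcal{L}$ via the $L^2$-bound from the first paragraph. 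A standard $\varepsilon/3$ argument then yields the required continuity.
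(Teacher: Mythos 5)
Your reduction to the two quantities $A$ and $B$, the use of Dovbysh--Sudakov uniqueness for well-definedness, and the computation of $\mathbb{E}A^n$ with the spherical cancellation $\|\veps\|^2=M$ are all correct and are in the spirit of the intended proof (the paper refers to Lemma~3 of Panchenko's mixed $p$-spin paper). However, there are two genuine gaps.

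\textbf{The $B$ term.} You correctly notice that, as the functional $\Psi_M$ is written in the paper (with the factor $(\xi'(1)-\xi'(\|\vtau\|^2))^{1/2}$ multiplying $\eta$ in the $y$-term), a residual dependence on $\|\vtau^\ell\|^2$ survives in $\mathbb{E}B^n$. Your proposed rescue via a ``finite-block empirical proxy'' cannot succeed, because the map $\mathcal{L}\mapsto \textnormal{law of }\|\vtau^1\|^2$ is genuinely \emph{discontinuous} for weak convergence of the array. Take $G_n$ uniform on $n$ orthonormal unit vectors: the array has $R_{\ell,\ell}=1$ and $R_{\ell,\ell'}\to 0$ weakly for $\ell\ne\ell'$, whose Dovbysh--Sudakov representative is necessarily $\delta_{\vec{0}}$, so $\|\vtau\|^2$ jumps from $1$ to $0$ in the limit; one checks that then $\tfrac{1}{M}\mathbb{E}\log B_n\to\tfrac{1}{2}\theta(1)$ while $\tfrac{1}{M}\mathbb{E}\log B_\infty=\tfrac{1}{2}\xi'(1)$, which disagree whenever $\xi(1)\neq 0$. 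The resolution is that the $y$-correction in the definition of $\Psi_M$ should read $\eta(\theta(1)-\theta(\|\vtau\|^2))^{1/2}$ rather than $\eta(\xi'(1)-\xi'(\|\vtau\|^2))^{1/2}$ (this matches the covariance $\theta$ of $y$, the last term $\theta(q_{k+1})-\theta(q_k)$ in $\mathcal{P}_M$, and Panchenko's Ising-case formula). With the $\theta$-correction, the diagonal contribution $\tfrac{M}{2}\sum_\ell\theta(\|\vtau^\ell\|^2)$ from the covariance of $y$ cancels against $-\tfrac{M}{2}\sum_\ell\theta(\|\vtau^\ell\|^2)$ from the exponential correction, exactly as it does for $A$, and $\mathbb{E}B^n=e^{Mn\theta(1)/2}\,\mathbb{E}\bigl\langle\exp\bigl(\tfrac{M}{2}\sum_{\ell\ne\ell'}\theta(R_{\ell,\ell'})\bigr)\bigr\rangle$, a bounded continuous function of finitely many off-diagonal overlaps. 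Your ``principal obstacle'' was an artifact of the paper's typo; it does not need a proxy argument, and no proxy argument can make the $\xi'$-version true.

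\textbf{The method of moments.} Even after the cancellation, the moments $\mathbb{E}A^n$ (and $\mathbb{E}B^n$) grow like $e^{cn^2}$ for a positive constant $c$, because the exponent $\tfrac12\sum_{\ell\ne\ell'}\veps^\ell\cdot\veps^{\ell'}\xi'(R_{\ell,\ell'})$ has order $n^2$. Since $\log A$ concentrates like a Gaussian, $A$ is of log-normal type, the classical example of an indeterminate moment problem; the Carleman criterion fails. Therefore convergence of $\mathbb{E}A_n^m\to\mathbb{E}A^m$ for every integer $m$ does \emph{not} give $A_n\to A$ in distribution, and the step ``method of moments \ldots\ upgrades moment convergence to convergence in distribution'' does not hold. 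The uniform $L^2$ bound on $\log A_n$ gives tightness but does not repair the loss of determinacy. The standard way around this (and the one implicitly referenced by the paper) is not through moments of $A$: one instead approximates the Gibbs average $\langle\cdot\rangle$ by an empirical average over $n$ replicas, whose logarithm is a continuous functional of the $n\times n$ overlap block and is uniformly integrable, and one controls the approximation error in $\mathbb{E}\log$ uniformly in $\mathcal{L}$ by Gaussian concentration; continuity then follows by an $\varepsilon/3$ argument in the spirit you describe, but applied to the finite-replica approximation of $\langle\cdot\rangle$ rather than to a surrogate for $\|\vtau^\ell\|^2$.
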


\smallskip

\noindent This is a generalization of Lemma 3 in \cite{Pan11:1} and can be verified exactly in the same way.  
Let $\mathcal{L}^{M,N}$ and $\mathcal{L}^M$ be the distributions generating $R^{M,N}$ and $R^M$, respectively. 
Then Lemma $\ref{AGM:lem2}$ leads to
\begin{align}\label{AGM:eq8}
\lim_{N\rightarrow\infty}\Lambda_{M,N}(u^{M,N})&=\lim_{N\rightarrow\infty}\Psi_M(\mathcal{L}^{M,N})=\Psi_M(\mathcal{L}^M).
\end{align}


\noindent{\bf Identitying the asymptotic Gibbs' measures using ultrametricity.} Since the asymptotic Gibbs measure $G_M$ satisfies the G.G. identities $(\ref{AGM:eq4})$, the main result in $\cite{Pan11:4}$ implies that the support of $G_M$ is ultrametric with probability one, that is, 
\begin{align*}
\mathbb{E}\left<I(R_{1,2}^M\geq \min(R_{1,3}^M,R_{2,3}^M))\right>_M=1.
\end{align*}
This implies that for any $q$, the inequality $q\leq \vtau^{\ell}\cdot\vtau^{\ell'}$ defines an equivalent relation and therefore, the 
array $(I(q\leq R_{\ell,\ell'}^M))_{\ell,\ell'\geq 1}$ is nonnegative definite since it is block-diagonal with blocks consisting of all elements
equal to one. Consider a sequence of functions $(H_k)_{k\geq 1}$ on $\left[0,1\right]$ by defining $H_{k}(q)=j/k$ if $j/(k+1)\leq q<(j+1)/(k+1)$ for $0\leq j\leq k$ and $H_k(1)=1.$ Then we see that $R_{k}^M=(H_k(R_{\ell,\ell'}^M))_{\ell,\ell\geq 1}$ is symmetric, weakly exchangable, and satisfies the G.G. identities. In addition, it is non-negative definite since it can be written as a convex combination of nonnegative definite arrays,
\begin{align*}
H_k(R_{\ell,\ell'}^M)=\sum_{j=1}^{k}\frac{1}{k}I\left(\frac{j}{k+1}\leq R_{\ell,\ell'}^M\right).
\end{align*}
Thus by the Dovbysh-Sudakov representation, $R_{k}^M$ can be generated by a sample from a random measure $G_M^k$ on the unit ball of some Hilbert space.
Since $\sup_{0\leq q\leq 1}|H_k(q)-q|\leq 1/k,$ $(R_k^M)_{k\geq 1}$ converges weakly to $R^M.$ From Lemma $\ref{AGM:lem2},$ we then obtain
\begin{align}\label{AGMU:add1}
\lim_{k\rightarrow\infty}\Psi_M(\mathcal{L}_k^M)=\Psi_M(\mathcal{L}^M),
\end{align}  
where $\mathcal{L}_k^M$ is the distribution for $R_k^M$. On the other hand, from the Baffioni-Rosati theorem \cite{BR00}, an ultrametric measure, such as $G_M^k$, that satisfies the G.G. identities and under which the overlaps $R_k^M$ take finitely many values $(p/k)_{0\leq p\leq k}$ can be identified as the Poisson-Dirichlet cascade with parameters 
\begin{align*}
\mathbf{m}^{M,k}&=(m_p^{M,k})_{0\leq p\leq k}\,\,\mbox{with $m_p^{M,k}=\mathbb{E}\left<I((R_k^M)_{1,2}\leq p/k)\right>_M,$}\\
\mathbf{q}^{M,k}&=(q_p^{M,k})_{0\leq p\leq k+1}\,\,\mbox{with $q_p^{M,k}=p/(k+1)$}.
\end{align*}
For details of this result, one may consult the proof of Theorem 15.3.6 in \cite{Tal11}. Let $\vx_k^M$ be the distribution function corresponding to this triplet $(k,\mathbf{m}^{M,k},\mathbf{q}^{M,k}).$ Using the same argument as Theorem 14.2.1 in \cite{Tal11}, we obtain
\begin{align}\label{AGM:eq6}
\Psi_M(\mathcal{L}_k^M)=\mathcal{P}_M(\vx_k^M).
\end{align}

\begin{lemma} \label{AGM:prop21}
For any two triplets $(k,\mathbf{m},\mathbf{q})$ and $(k',\mathbf{m}',\mathbf{q}')$ associated with the distribution functions $\vx$ and $\vx'$ respectively, we have
\begin{align}
\label{AGM:prop2}
\left|\mathcal{P}_M(\vx)-\mathcal{P}_M(\vx')\right|\leq \frac{\xi'(1)}{2}\int_0^1\left|\vx(q)-\vx'(q)\right|dq.
\end{align} 
\end{lemma}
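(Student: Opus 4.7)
My strategy is to reduce $(\ref{AGM:prop2})$ to a uniform pointwise bound on the partial derivatives of $\mathcal{P}_M$ with respect to $m_\ell$. Inserting an additional grid point $q^*\in(q_\ell,q_{\ell+1})$ into the partition together with a repeated value $m_\ell$ leaves both $\vx$ and $\mathcal{P}_M(\vx)$ unchanged: in the recursion for $X_p^M$, two consecutive identical $m$-values collapse the corresponding nested expectations, and $\theta(q^*)-\theta(q_\ell)+\theta(q_{\ell+1})-\theta(q^*)=\theta(q_{\ell+1})-\theta(q_\ell)$ is telescoping. Hence I may assume that both triplets share a common partition $0=q_0\leq q_1\leq\cdots\leq q_{k+1}=1$ and differ only in the sequences $(m_\ell)$ and $(m'_\ell)$. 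Under this normalization
\[
\int_0^1|\vx(q)-\vx'(q)|\,dq=\sum_{\ell=1}^{k-1}|m_\ell-m'_\ell|(q_{\ell+1}-q_\ell),
\]
and the linear interpolation $m_\ell^t=(1-t)m_\ell+tm'_\ell$ preserves the admissibility $0=m_0^t\leq\cdots\leq m_k^t=1$. The chain rule and the fundamental theorem of calculus then reduce the lemma to the pointwise estimate
\begin{equation}\label{eq:ptwsgoal}
\Big|\frac{\partial\mathcal{P}_M(\vx)}{\partial m_\ell}\Big|\leq\frac{\xi'(1)}{2}(q_{\ell+1}-q_\ell),\qquad 1\leq \ell\leq k-1,
\end{equation}
uniformly in the triplet $(k,\mathbf{m},\mathbf{q})$.

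To establish $(\ref{eq:ptwsgoal})$, I would differentiate $\mathcal{P}_M=X_0^M-\tfrac12\sum_{p=1}^{k}m_p(\theta(q_{p+1})-\theta(q_p))$ via the cavity recursion. Introduce the normalized cavity weights $\hat W_j=\exp(m_jX_{j+1}^M)/\mathbb{E}_j\exp(m_jX_{j+1}^M)$, so that $\mathbb{E}_j\hat W_j=1$. A routine induction on the recursion $X_p^M=m_p^{-1}\log\mathbb{E}_p\exp(m_pX_{p+1}^M)$ yields
\[
\frac{\partial X_0^M}{\partial m_\ell}=\frac{1}{m_\ell}\,\mathbb{E}\big[\hat W_0\hat W_1\cdots\hat W_\ell\,(X_{\ell+1}^M-X_\ell^M)\big],
\]
where the product of weights defines a Gibbs-type probability measure associated with a Ruelle cascade truncated at depth $\ell+1$. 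Applying Gaussian integration by parts to the increment $\vz_\ell$, whose covariance carries the factor $\xi'(q_{\ell+1})-\xi'(q_\ell)$, and using $\theta(q_{\ell+1})-\theta(q_\ell)=\int_{q_\ell}^{q_{\ell+1}}q\xi''(q)\,dq$, I expect, in analogy with the Ising-spin computation of Talagrand and Panchenko, a representation of the form
\[
\frac{\partial\mathcal{P}_M(\vx)}{\partial m_\ell}=\frac{1}{2}\int_{q_\ell}^{q_{\ell+1}}\big(\mathbb{E}\langle\xi'(R_{1,2}^{(\ell)})\rangle-\xi'(q)\big)\,dq,
\]
where $R_{1,2}^{(\ell)}\in[0,1]$ is a two-replica overlap under the cascade Gibbs measure at level $\ell$. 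Since $\xi'$ maps $[0,1]$ into $[0,\xi'(1)]$, the integrand is bounded in absolute value by $\xi'(1)$, and integrating over $[q_\ell,q_{\ell+1}]$ gives $(\ref{eq:ptwsgoal})$.

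\textbf{Main obstacle.} The principal difficulty is carrying out the Gaussian integration-by-parts computation rigorously in the spherical setting, where the innermost quantity $X_{k+1}^M=\log\int_{S_M}\exp(\veps\cdot(\vz_0+\cdots+\vz_k))\,d\lambda_M(\veps)$ is itself a logarithm of a sphere integral rather than of a product over $\{-1,+1\}^M$. The cavity-cascade identification yielding the overlap variable $R_{1,2}^{(\ell)}\in[0,1]$ and the resulting compact representation above must be justified with care, so that the final constant is exactly $\xi'(1)/2$ and independent of $M$ and of the cascade depth $k$, as required for the uniform bound used in Section $\ref{proof}$.
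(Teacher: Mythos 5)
The reduction in your first paragraph is sound: refining either triplet by inserting a repeated $m$-value at a new grid point leaves $\mathcal{P}_M$ unchanged, so one may pass to a common $q$-grid and interpolate linearly in $\mathbf{m}$, and on this grid $\int_0^1|\vx-\vx'|\,dq=\sum_\ell|m_\ell-m'_\ell|(q_{\ell+1}-q_\ell)$. The derivative identity $\partial_{m_\ell}X_0^M=\mathbb{E}\bigl[\hat W_0\cdots\hat W_{\ell-1}\tfrac{1}{m_\ell}\bigl(\mathbb{E}_\ell[\hat W_\ell X_{\ell+1}^M]-X_\ell^M\bigr)\bigr]=\tfrac{1}{m_\ell^2}\mathbb{E}\bigl[\hat W_0\cdots\hat W_{\ell-1}\mathbb{E}_\ell(\hat W_\ell\log\hat W_\ell)\bigr]$ is also correct. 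However, from here your argument has a genuine gap, which you yourself flag as the ``main obstacle.'' This quantity is a relative entropy, not a covariance pairing: there is no factor of $\vz_\ell$ sitting outside a smooth functional, so Gaussian integration by parts on $\vz_\ell$ has no direct handle on it. The representation $\partial_{m_\ell}\mathcal{P}_M=\tfrac12\int_{q_\ell}^{q_{\ell+1}}(\mathbb{E}\langle\xi'(R_{1,2}^{(\ell)})\rangle-\xi'(q))\,dq$ that you ``expect'' is the crux of the lemma and is neither derived nor obviously correct (note also that $R_{1,2}^{(\ell)}$, as you define it, is constant in $q$, so the left factor of the integrand does not really vary over $[q_\ell,q_{\ell+1}]$; and the sphere overlap lies in $[-1,1]$, not $[0,1]$, which matters here precisely because the paper treats odd $p$, for which $\xi'$ is not nonnegative on $[-1,0)$). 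Finally, even granting an overlap formula, bounding $\partial_{m_\ell}X_0^M$ from below against $\tfrac12(\theta(q_{\ell+1})-\theta(q_\ell))$ by nonnegativity alone is not enough, since $\theta'(q)=q\xi''(q)$ can exceed $\xi'(1)$.

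The paper proves this lemma by referring to the method of Theorem 14.11.2 in Talagrand's book, and the standard route there is dual to yours: one refines to a \emph{common} $\mathbf{m}$-sequence and interpolates the Gaussian covariances, i.e.\ the $q$-parameters, writing $\vz_p^t=\sqrt{t}\,\vz_p'+\sqrt{1-t}\,\vz_p$. Then $\tfrac{d}{dt}X_0^M$ is precisely of the form amenable to Gaussian integration by parts (the interpolated field appears multiplicatively), and the resulting cascade-overlap terms combine with the $\theta$-part of $\mathcal{P}_M$ to give a bound by $\tfrac12\xi'(1)\sum_\ell(m_\ell-m_{\ell-1})|q_\ell-q'_\ell|=\tfrac12\xi'(1)\int_0^1|\vx-\vx'|$. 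If you want to salvage the $m$-differentiation route, you would need to prove (not conjecture) an identity expressing the entropy $\mathbb{E}_\ell[\hat W_\ell\log\hat W_\ell]$ through the cascade overlap distribution, which is a separate, nontrivial computation; as written, the plan stops exactly where the work begins.
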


\smallskip

\noindent 
It is well-known that the same Lipschitz property as $(\ref{AGM:prop2})$ for the Parisi functional in the Ising spin glass model was established by F. Guerra \cite{Guerra03}. In the spherical models, despite of the non-product structure on the configuration space, one may find the same approach as Theorem 14.11.2 in \cite{Tal11} can also be applied to obtain $(\ref{AGM:prop2}).$ Let $\mathcal{F}$ be the collection of all distribution functions $\vx$ on $\left[0,1\right]$. Define a metric $d$ on $\mathcal{F}$ by letting $d(\vx,\vx')=\int_0^1\left|\vx(q)-\vx'(q)\right|dq$ for every $\vx,\vx'\in \mathcal{F}.$ Using the Lipschitz inequality $(\ref{AGM:prop2})$, we can extend the functional $\mathcal{P}_M$ continuously to $\mathcal{F}.$

\smallskip

\noindent{\bf Proof of Theorem \ref{thm1}.} If we denote by $\vx^M$ the distribution of $R_{1,2}^M,$ it follows that by using $(\ref{AGMU:add1})$ and  $(\ref{AGM:eq6})$,
\begin{align}\label{AGM:eq7} 
\Psi_M(\mathcal{L}^M)=\mathcal{P}_M(\vx^M).
\end{align}
From Helly's selection theorem, $(\vx^M)_{M\geq 1}$ contains a weakly convergent subsequence with limit $\vx^\infty.$ For clarity, we simply use $(\vx^M)$ to denote this subsequence. Using $(\ref{AGM:prop2})$ and $(\ref{AGM:eq7})$, if we set $\vx_k^\infty=H_k(\vx^\infty),$ then
\begin{align*}
\limsup_{M\rightarrow\infty}\left|\Psi_M(\mathcal{L}^M)-\mathcal{P}_M(\vx_k^{\infty})\right|
&=\limsup_{M\rightarrow\infty}\left|\mathcal{P}_M(\vx^M)-\mathcal{P}_M(\vx_k^{\infty})\right|\\
&\leq \frac{\xi'(1)}{2}\left(\limsup_{M\rightarrow\infty}d(\vx^M,\vx^{\infty})+d(\vx^\infty,\vx_k^{\infty})\right)\\
&=\frac{\xi'(1)}{2}d(\vx^\infty,\vx_k^{\infty})
\end{align*} 
for every $k\geq 1.$ To sum up, from $(\ref{AGM:eq2.1}),$ $(\ref{AGM:eq8})$, and this inequality, we conclude that 
\begin{align*}
\liminf_{N\rightarrow\infty}\frac{1}{N}\mathbb{E}\log Z_N&\geq \limsup_{M\rightarrow\infty}\left( \Psi_M(\mathcal{L}^M)+\frac{1}{M}\log\nu_M(A_\delta)\right)-\delta\xi'(1)\\
&\geq \limsup_{M\rightarrow\infty}\mathcal{P}_M(\vx_k^\infty)-\limsup_{M\rightarrow\infty}\left|\Psi_M(\mathcal{L}^M)-\mathcal{P}_M(\vx_k^\infty)\right|-\delta\xi'(1)\\
&\geq\inf_{\vx}\mathcal{P}(\vx)-\frac{\xi'(1)}{2}d(\vx^\infty,\vx_k^{\infty})-\delta\xi'(1)
\end{align*}
for every $k\geq 1$ and $\delta>0$ and this completes our proof by letting $\delta\rightarrow 0$ and $k\rightarrow\infty.$\qed

\end{document}